\oddsidemargin \setlength{\textwidth}{170mm} \setlength{\textheight}{230mm} \pagestyle{plain}
\newcommand{\C}{\ensuremath {\mathcal{C}} } 
\renewcommand{\L}{\ensuremath {{\mathcal{L}(\mathcal{C},\mathbb R^d)}} } \newcommand{\R}  {\ensuremath {{\mathbb R^d}} }
\newcommand{\Rs} {\ensuremath {{\mathbb R^{(s+2)d}}} }
\newcommand{\Cs} {\ensuremath {{\overline{\mathcal{C}}}} }
\newcommand{\Omegas}[1]{\ensuremath {{\overline{\Omega}}_{#1}} }
\newcommand{\ZeroMatrix}{\ensuremath {\text{\bf {O}}} }
\newcommand{\norma}[1]{\bigl\|      #1 \bigr\|} 
\newcommand{\norm} [1]{\bigl\vvvert #1 \bigr\vvvert}  
\newcommand{\MyOverline}[1]{\overline{#1 \vphantom{\widetilde{a}}}}
\newcommand{\omegah}{\Bigl(\dfrac{\omega}{h}\Bigr)}
\newtheorem{MyTheorem}{Theorem}[section]
\newtheorem{MyCorollary}[MyTheorem]{Corollary}
\newtheorem{MyLemma}[MyTheorem]{Lemma}
\newtheorem{MyRemark}[MyTheorem]{Remark}
\theoremstyle{definition}
\newtheorem{MyDefinition}[MyTheorem]{Definition}
\newenvironment{emph_}{\bfseries}{}              
\newcounter{MyTable}[section]
\newenvironment*{MyTable}[1]              
{                                        
  \begin{table}[!h]
    \tabcolsep=1 mm
    \stepcounter{MyTable}
    \caption{#1}
  \end{table}
}
\title{Two-step Runge-Kutta methods for \\ Retarded Functional Differential Equations}
\date{}
\author{\\
A. Tuzov      \footnote{Department of Control systems, Siberian State Aerospace University, Krasnoyarsk,   Russia, e-mail: \mbox{e-mail: tuzov@sibsau.ru}},
}
\begin{document}
\abovedisplayskip=6pt \belowdisplayskip=6pt

\begin{center} \bf \LARGE
  Two-step Runge-Kutta methods for\\
  \vspace{0pt}  Retarded Functional Differential Equations
\end{center}
\vspace{-12pt}
\begin{center} \large
  \renewcommand{\thefootnote}{\fnsymbol{footnote}}
   Anton Tuzov      \footnote{Department of Control systems, Siberian State Aerospace University, Krasnoyarsk, Russia, \mbox{e-mail: tuzov@sibsau.ru} }
\end{center}
\vspace{-12pt}

\begin{abstract}
\noindent
  This paper presents a class of two-step Runge-Kutta (TSRK) methods for the numerical solution of Retarded Functional Differential Equations (RFDEs).
  A convergence theorem is formulated and proved. Explicit methods up to order five are constructed. To avoid order reduction for mildly stiff problems
  the uniform stage order of the methods is chosen to be closed to uniform order.

\noindent
{\it Keywords}: Functional differential equations; two-step Runge-Kutta methods; A-methods; order conditions.

\noindent
  AMS Subject Classification: 65Q05
\end{abstract}
\vspace{-24pt}
\section{Two-step Runge-Kutta methods for \\Ordinary Differential Equations}
\vspace{-6pt} For the numerical approximation of the solution $y(t)$ of a system of Ordinary Differential Equations
\begin{equation}
  \begin{split}
    y\,'(t) &=f(t,y), \quad t \in [t_0,T],\\
    y(t_0)&=y_0,
  \end{split}
\end{equation}
where $f:\ \mathbb R \times \R \rightarrow \R, \quad y_0 \in \R$,\\
we consider the class of General Linear Methods\cite{Butcher,Jackiewicz}
\begin{alignat}{2}\label{GLMs}
  Y^{[n]}_i&=\sum\limits_{j=1}^s a_{ij}\, hF^{[n]}_j+\sum\limits_{j=1}^r u_{ij} y^{[n-1]}_j, & \qquad &i=1,\dots ,s,  \notag\\
  y^{[n]}_i&=\sum\limits_{j=1}^s b_{ij}\, hF^{[n]}_j+\sum\limits_{j=1}^r v_{ij} y^{[n-1]}_j, &        &i=1,\dots ,r,  \\
  F^{[n]}_i&=f(t_{n-1}+c_i h,Y^{[n]}_i),                                                     &        &i=1,\dots ,s,  \notag
\end{alignat}
where $y^{[n-1]}_1,\dots,y^{[n-1]}_r$           --- input vectors, available at step number $n$,\linebreak[3]
      $Y^{[n]}_1,\dots,Y^{[n]}_s$               --- stage values,\linebreak[3]
      $F^{[n]}_1,\dots,F^{[n]}_s$               --- derivative values,\linebreak[3]
      $a_{ij},\ u_{ij},\ b_{ij},\ v_{ij},\ c_i$ --- coefficients of the method,
      $h$~\nobreakdash                          --- integration stepsize.

\enlargethispage{2\baselineskip}
Let us restrict ourselves to two-step methods and choose $r=s+2$ \mathindent=0em
\begin{alignat*}{3}
  y^{[n-1]}_1&\approx y(t_{n-1}), \quad & y^{[n-1]}_2&\approx y(t_{n-2}), \quad & y^{[n-1]}_{2+i}&\approx h y\,'(t_{n-2}+c_i h),\qquad i=1,\dots ,s.\quad \text{Then}\\
  y^{[n-1]}_1&=y_{n-1},                 & y^{[n-1]}_2&=y_{n-2},                 & y^{[n-1]}_{2+i}&=h f(t_{n-2}+c_i h,Y^{[n-1]}_i)=hF^{[n-1]}_i,\quad i=1,\dots,s,
\end{alignat*}
\mathindent=2.5em and \eqref{GLMs} takes the form
\begin{alignat}{2}\label{two_step_GLMs_draft_ODEs}
  Y^{[n]}_i&=h \sum\limits_{j=1}^s a_{ij}\, F^{[n]}_j+\ u_{i1}y_{n-1}+u_{i2}y_{n-2}+h\sum\limits_{j=1}^s u_{i,\,2+j} F^{[n-1]}_j, & \qquad &i=1,\dots ,s,  \notag\\
  y_n      &=h \sum\limits_{j=1}^s b_{1j}\, F^{[n]}_j+\ v_{11}y_{n-1}+v_{12}y_{n-2}+h\sum\limits_{j=1}^s v_{1,\,2+j} F^{[n-1]}_j,                          \\
  F^{[n]}_i&=f(t_{n-1}+c_i h,Y^{[n]}_i),                                                                                          &        &i=1,\dots ,s.  \notag
\end{alignat}
In the construction of GLMs it is assumed that $y^{[n-1]}_i=u_i\, y(t_{n-1})+v_i\, hy\,'(t_{n-1})+O(h^2)$ and 'preconsistensy conditions' holds
\begin{equation}\label{preconsistency_conditions}
  \begin{split}
    Vu=u,\\
    Uu={\mathbf 1}.
  \end{split}
\end{equation}
For \eqref{two_step_GLMs_draft_ODEs} we have $u_1=1,\ v_1=0,\quad u_2=1,\ v_2=-1,\quad u_{2+i}=0,\ v_{2+i}=1,\; i=1,\dots,s$. \linebreak[3] It follows from \eqref{preconsistency_conditions} that
\begin{align*}
  u_{i2}&=1-u_{i1},\qquad i=1,\dots,s,\\
  v_{12}&=1-v_{11}.
\end{align*}
Let us denote
\begin{alignat*}{5}
  K^{[n]}_i&:=F^{[n]}_i  &\quad \widetilde{a}_{ij}&:=u_{i,\,2+j}, & \quad u_i&:=u_{i1}, &\quad \Rightarrow u_{i2}&=1-u_i, &\qquad j&=1,\dots,s,\ i=1,\dots,s,\\
        b_j&:=b_{1j},    &         \widetilde{b_j}&:=v_{1,\,2+j}, &         v&:=v_{11}, &      \Rightarrow v_{12}&=1-v,   &       j&=1,\dots,s,
\end{alignat*}
then the method \eqref{two_step_GLMs_draft_ODEs} satisfying 'preconsistensy conditions'~\eqref{preconsistency_conditions} takes the form
\begin{alignat}{4}\label{two_step_GLMs_ODEs}
  y_n      &=  \ (1-v) y_{n-2}+v y_{n-1}   & &+h\sum\limits_{j=1}^s \widetilde{b_j}   K^{[n-1]}_j & &+h \sum\limits_{j=1}^s b_j   K^{[n]}_j, & \qquad                \notag\\
  K^{[n]}_i&=f(t_{n-1}+c_i h,Y^{[n]}_i),   & &                                                    & &                                        &       &i=1,\dots ,s, \\
  Y^{[n]}_i&=  (1-u_i) y_{n-2}+u_i y_{n-1} & &+h\sum\limits_{j=1}^s \widetilde{a}_{ij}K^{[n-1]}_j & &+h\sum\limits_{j=1}^s a_{ij} K^{[n]}_j, &       &i=1,\dots ,s. \notag
\end{alignat}

\section{Two-step Runge-Kutta methods for \\Retarded Functional Differential Equations}
We begin with notations introduced in~\cite{Maset} (see also~\cite{Maset_Zennaro}).\\
Let $r\in[0,+\infty)$, and \C be the space of continuous functions $[-r,0]\rightarrow \R$, equipped with the uniform norm $\norma{\varphi}_\C=\max\limits_{\theta \in [-r,0]}\norma{\varphi(\theta)}_\R
\ , \quad \varphi \in\C$, \; where $\norma{\cdot}_\R$ is an arbitrary norm on \R.\linebreak[3] Let $u$ be a continuous function $[a-r,b)\rightarrow \R$, where $a<b$. Then $\forall \, t\in[a,b)$ shift
function is defined by $u_t(\theta)=u(t+\theta), \quad \theta\in[-r,0]$ and $u_t\in\C$.

Let $s$ be stage number of the method, \linebreak[3] $\| \cdot \|$ be the norm on \Rs defined as the maximum of the norms of $s+2$ subvectors in \R: \mathindent=0em
\begin{align*}
  \Bigl( \forall \; z=(z_1,\dots ,z_{s+2})^T, \quad z_j \in \R,\; j=1,\dots,s+2 \Bigr):                         & \quad \norma{z}=\max\limits_{j=1,\dots,s+2} \norma{z_j}_\R ,
\end{align*}
and \Cs be the space of continuous functions  $[-r,0]\rightarrow \Rs$, equipped with the uniform norm:
\begin{align*}
  \Bigl( \forall \; \varphi=(\varphi_1,\dots ,\varphi_{s+2})^T, \quad \varphi_j \in \C,\; j=1,\dots,s+2 \Bigr): & \quad \norm{\varphi}=\max\limits_{\theta \in [-r,0]} \norma{\varphi(\theta)}=\max\limits_{j=1,\dots,s+2} \norma{\varphi_j}_\C .
\end{align*}
\mathindent=2.5em

\noindent Consider a system of Retarded Functional Differential Equations (RFDEs)
\begin{align}\label{RFDEs_draft}
  y\,'(t) =f(t,y_t),
\end{align}
where $f:\ \Omega \rightarrow \R,\quad \Omega = \mathbb R \times \C$.

It is assumed that the function $f$ is continuous and has the first derivative with respect to the second argument $f\,': \quad \Omega \rightarrow \L$, which is bounded and continuous with respect to the second argument. Then for each $(\sigma,\varphi)\in\Omega$ there exists an unique non-continuable solution $y=y(\sigma,\varphi): \quad [\sigma-r,\overline{t}\,) \rightarrow \R$
of~\eqref{RFDEs_draft} through $(\sigma,\varphi)$, where $\overline{t}=\overline{t}(\sigma,\varphi)$, $\overline{t} \in (\sigma,\infty]$, i.e. $y(t)$ satisfies~\eqref{RFDEs_draft} for $t\in
[\sigma,\overline{t}\,)$ and $y_{\sigma}(\theta)=\varphi(\theta)$ for $\theta \in [-r,0]$.

We next consider the system~\eqref{RFDEs_draft} through $(t_0,\phi_{\,0})\in\Omega$ on the $[t_0,T]$, where $T\in(t_0,\overline{t}(t_0,\phi_{\,0}))$ \vspace{-\baselineskip}
\begin{equation}\label{RFDEs}
  \begin{split}
     y\,'(t)         &=f(t,y_t),           \quad      t      \in [t_0,T],\\
     y_{t_0}(\theta) &=\phi_{\,0}(\theta), \quad \;\; \theta \in [-r,0].
  \end{split}
\end{equation}

The first derivative of $f$ with respect to the second argument is bounded, hence $f$ satisfies the Lipschitz condition with respect to the second argument
\begin{align}\label{Lipschitz_condition}
  \Bigl( \forall \; (t,\varphi_{\,t}),(t,\psi_{\,t})\in \Omega  \Bigr): \quad \norma{f(t,\varphi_{\,t})-f(t,\psi_{\,t})}_\R \le L \norma{\varphi_{\,t}-\psi_{\,t}}_\C,
\end{align}
where $L=\sup\limits_{(t,\, \varphi_{t}) \, \in \, \Omega}\norma{f\,'(t,\varphi_{\,t})}_\L$.

We introduce the class of two-step Runge-Kutta (TSRK) methods for RFDEs on the basis of approach proposed in~\cite{Maset_Zennaro,Maset}. We can reformulate the method~\eqref{two_step_GLMs_ODEs} for RFDEs~\eqref{RFDEs} with
the \begin{emph_}equispaced\end{emph_} mesh \linebreak[3] $\Delta_N=\left\{t_n:\quad t_n=t_0+nh,\; n=0,\dots,N  \right\}$, $\quad h=\dfrac{T-t_0}{N} \;$ as follows

\medmuskip=2mu \mathindent=0em
\smallskip
\begin{subequations}\label{TS_GLMs}
  \begin{alignat}{4}
    \eta^{[n]}(\alpha h) &=(1-v(\alpha))\eta^{[n-1]}(0)                          & &+v(\alpha)\eta^{[n-1]}(h)   & &+h\sum\limits_{j=1}^s \widetilde{b_j}(\alpha)K^{[n-1]}_j    & &+h\sum\limits_{j=1}^s b_j(\alpha)K^{[n]}_j,                 \label{TS_GLMs:eta}\\
                         &                                                       & &                            & &\ \alpha\in[0,1],                                                                                                          \notag\\
    K^{[n]}_i            &=f(t_{n-1}+c_i h,{Y_{\ c_ih}^{[n]\; i}}),              & &                            & &                                                            & &\ i=1,\dots ,s,                                             \label{TS_GLMs:Ki}\\
    Y^{[n]\; i}(\alpha h)&=(1-u_i(\alpha))\eta^{[n-1]}(0)                        & &+u_i(\alpha)\eta^{[n-1]}(h) & &+h\sum\limits_{j=1}^s \widetilde{a}_{ij}(\alpha)K^{[n-1]}_j & &+h\sum\limits_{j=1}^s a_{ij}(\alpha)K^{[n]}_j,              \label{TS_GLMs:Yi}\\
                         &                                                       & &                            & &\ \alpha\in [0,c_i],                                        & &\ i=1,\dots,s,                                              \notag\\
    \eta^{[n]}(\theta)   &=\eta^{[n-1]}_{\: h}(\theta),                          & &                            & &\ \theta \in [-r,0],                                                                                                       \label{TS_GLMs:eta_extended}\\
    Y^{[n]\; i}(\theta)  &=\eta^{[n-1]}_{\: h}(\theta),                          & &                            & &\ \theta \in [-r,0],                                        & &\ i=1,\dots ,s,                                             \label{TS_GLMs:Yi_extended}
  \end{alignat}
\end{subequations}
\mathindent=2.5em
\begin{alignat*}{2}
  \intertext{where }
  &\eta^{[n-1]}:\ [-r,h]\rightarrow \R, \quad K^{[n-1]}_i \in \R                                                 &\quad        &\text{are available as approximations}\\
  &                                                                                                              &             &\text{computed in the step } n-1,\\
  &Y^{[n]\; i}                                                                                                   &\text{--- }  &\text{stage functions},\\
  &K^{[n]}_i                                                                                                     &\text{--- }  &\text{stage values},\\
  &u_i(\cdot),\ \widetilde{a}_{ij}(\cdot), a_{ij}(\cdot),\quad v(\cdot),\ \widetilde{b_j}(\cdot),\ b_j(\cdot) \  &\text{--- }  &\text{coefficients of the method}.
\end{alignat*}
\begin{center}\label{MyPic_1}
  \includegraphics[scale=0.75]{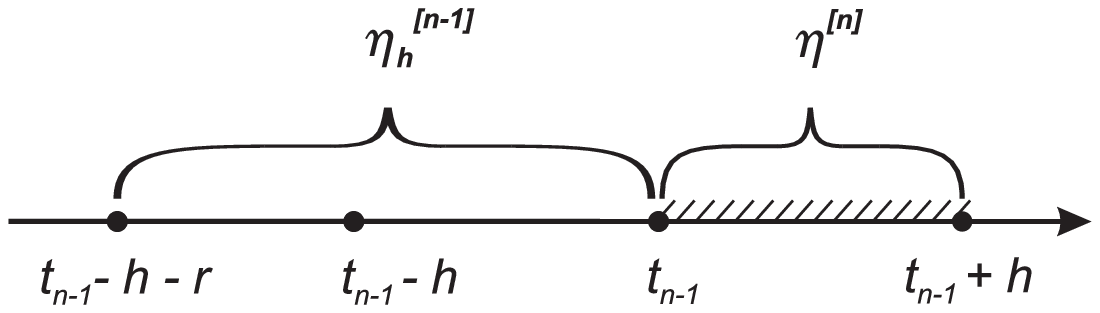}
\end{center}
\medmuskip=4mu Thus $\eta^{[n]}(\alpha h) \approx y(t_{n-1}+\alpha h), \; \alpha \in [0,1]$, $\quad \eta^{[n]}(\theta) \approx y(t_{n-1}+\theta),\; \theta \in [-r,0]$, hence \linebreak[3]
$\eta^{[n]}_{\: h}\approx y_{t_n}$ on $[-r-h,0]$, where $t_n=t_{n-1}+h$.

\pagebreak[4]
It is assumed that coefficients $(a_{ij}(\cdot),b_j(\cdot),c_i,\widetilde{a}_{ij}(\cdot),\widetilde{b}_j(\cdot),u_i(\cdot),v(\cdot))_{\,i,\, j=1,\dots,s}\ $ of TSRK methods satisfy the following
conditions: \mathindent=1.5em
\begin{subequations}\label{coeff_basic assumptions}
\begin{alignat}{4}
  \text{\textbullet}\ & a_{ij}(\cdot),\ \widetilde{a}_{ij}(\cdot),\ u_i(\cdot), &\quad &\text{are polynomial functions } [0,c_i]\rightarrow {\mathbb R}, & \qquad i,\ j&=1,\dots,s.\\
                      & b_j(\cdot),\    \widetilde{b}_j(\cdot),\      v(\cdot), &      &\text{are polynomial functions }   [0,1]\rightarrow {\mathbb R}, &            j&=1,\dots,s.\notag\\
  \text{\textbullet}\ & c_i \in {\mathbb R},\ c_i \ge 0,                        &      &                                                                 &            i&=1,\dots,s.\\
  \text{\textbullet}\ & a_{ij}(0)=\widetilde{a}_{ij}(0)=0,                      &      & u_i(0)=1,                                                       &        i,\ j&=1,\dots,s.  \label{Yi_continuous}\\
  \text{\textbullet}\ & b_j(0)\ =\widetilde{b}_j(0)\ =0,                        &      & v(0)=1,                                                         &            j&=1,\dots,s.  \label{eta_continuous}
\end{alignat}
\end{subequations}
\mathindent=2.5em

\noindent
The last two conditions correspondingly 
guarantee continuity of the stage functions $Y^{[n]\; i}_{\ c_i h} \in \C$ and the approximate solution $\eta^{[n]}_{\: h} \in \C$ provided that approximate solution computed in the previous step is
continuous function $\eta^{[n-1]}_{\: h} \in \C.$

\begin{MyRemark}
  If the conditions
  \begin{alignat}{4}\label{one_step_conditions}
    u_i(\cdot)&=1, & \qquad \widetilde{a}_{ij}(\cdot)&=0, &\qquad i,\ j&=1,\dots,s,\\
      v(\cdot)&=1, &       \widetilde{b}_j(\cdot)   &=0,  &           j&=1,\dots,s,\notag
  \end{alignat}
  hold, the two-step method~\eqref{TS_GLMs} becomes the one-step RK method for RFDEs  introduced in~\cite{Maset_Zennaro,Maset}, where initial function $\ \varphi:=\eta^{[n-1]}_{\: h}$.
\end{MyRemark}
\begin{MyDefinition}
  The TSRK method with coefficients $(a_{ij}(\cdot),b_j(\cdot),c_i,\widetilde{a}_{ij}(\cdot),\widetilde{b}_j(\cdot),u_i(\cdot),v(\cdot))_{i,j=1,\dots,s}\ $
  is called explicit if $a_{ij}(\cdot)=0 \text{ for all $j$ such that} j\ge i,\quad i,j=1,\dots,s.$
\end{MyDefinition}

Make the change of the independent variables: $\alpha h=\omega+c_i h,\; \theta=\omega+c_i h$ and introduce the shifted coefficient functions:
\begin{align}\label{shifted_coeff}
  \MyOverline{u}_i(\cdot)=u_i(\,\cdot+c_i), \quad \MyOverline{\widetilde{a}}_{ij}(\cdot)=\widetilde{a}_{ij}(\,\cdot+c_i), \quad \MyOverline{a}_{ij}(\cdot)={a}_{ij}(\,\cdot+c_i), \qquad i,\ j=1,\dots,s.
\end{align}
Then~\eqref{TS_GLMs:Yi} can be rewritten in the form: \mathindent=0em
\begin{alignat}{3}\label{TS_GLMs_shifted_Yi}
  Y^{[n]\; i}_{\ c_i h}(\omega)=\,&\left(1-\MyOverline{u}_i\omegah \right)  \eta^{[n-1]}(0)                                                            & +\, &\MyOverline{u}_i\omegah \eta^{[n-1]}(h)+ \notag\\
                               +\,&h\sum\limits_{j=1}^s \MyOverline{\widetilde{a}}_{ij}\omegah f \left(t_{n-2}+c_j h,\ {Y_{\ c_jh}^{[n-1]\; j}}\right) & +\, &h\sum\limits_{j=1}^s \MyOverline{a}_{ij}\omegah f \left(t_{n-1}+c_j h,\ {Y_{\ c_jh}^{[n]\; j}}\right),\\
                                  &\omega\in[-c_ih,0],                                                                                                 &  \, &i=1,\dots,s.                     \notag
\end{alignat}
\mathindent=2.5em Let us introduce the following notation: \abovedisplayskip=2pt \belowdisplayskip=2pt
\begin{align}\label{coeff_notations}
  c_{s+1}:=1,\quad u_{s+1}(\cdot):=v(\cdot),\quad \widetilde{a}_{s+1,\,j}(\cdot):=\widetilde{b}_j(\cdot),\quad a_{s+1,\,j}(\cdot):=b_j(\cdot),\qquad j=1,\dots,s.
\end{align}
\abovedisplayskip=6pt \belowdisplayskip=6pt This notation allows us to rewrite~\eqref{TS_GLMs:eta} like~\eqref{TS_GLMs_shifted_Yi}: \mathindent=0em
\begin{alignat}{2}\label{TS_GLMs_shifted_eta}
  \eta^{[n]}_{\: h}(\omega) =&   \left(1-\MyOverline{u}_{s+1}\omegah \right)  \eta^{[n-1]}(0)                                                                & +&\MyOverline{u}_{s+1}\omegah \eta^{[n-1]}(h)+ \notag\\
                            +&\, h\sum\limits_{j=1}^s \MyOverline{\widetilde{a}}_{{s+1},\,j}\omegah f \left(t_{n-2}+c_j h,\ {Y_{\ c_jh}^{[n-1]\; j}}\right)  & +&h\sum\limits_{j=1}^s \MyOverline{a}_{{s+1},\,j}\omegah f \left(t_{n-1}+c_j h,\ {Y_{\ c_jh}^{[n]\; j}}\right), \hspace{-5cm}\\
                             &\, \omega\in[-h,0].\notag
\end{alignat}
Denote $z^{[n]}(\omega)=\left( Y^{[n]\; 1}_{\ c_1 h}(\omega),\dots,Y^{[n]\; s}_{\ c_s h}(\omega), \eta^{[n]}_{\: h}(\omega), \eta^{[n]}(0) \right)^T,\qquad n=1,2,\dots \; .$ \\
Then the method~\eqref{TS_GLMs} can be reformulated as follows:

\mathindent=0em
\begin{alignat}{2}\label{TS_GLMs_shifted}
  z^{[n]\; i}(\omega)=  & \left(1-\MyOverline{u}_i\omegah \right) z^{[n-1],\; {s+2}}                                                   & +\,& \MyOverline{u}_i\omegah z^{[n-1],\; {s+1}}(0)+                                                  \notag\\
                     +\,& h\sum\limits_{j=1}^s \MyOverline{\widetilde{a}}_{ij}\omegah f \left(t_{n-2}+c_j h,\ {z^{[n-1]\; j}\,}\right) & +\,& h\sum\limits_{j=1}^s \MyOverline{a}_{ij}\omegah f \left(t_{n-1}+c_j h,\ {z^{[n]\; j}\,}\right), \notag\\
                      \,& \omega\in[-c_ih,0],                                                                                          &  \,& i=1,\dots,s+1,                                                                          \\[6pt]
  z^{[n]\; i}(\omega)=\,& z^{[n-1],\; {s+1}}_{\, c_i h}(\omega),                                                                                                                                                                           \notag\\
                      \,& \omega\in[-r,-c_ih],                                                                                         &  \,& i=1,\dots ,s+1,                                                                           \notag\\[6pt]
  z^{[n],\; s+2}     =\,& z^{[n-1],\; {s+1}}(0).                                                                                                                                                                                           \notag
\end{alignat}
\mathindent=2.5em

Denote $\Omegas{h}=\Bigl\{(\sigma,\varphi)\in \mathbb R \times \Cs\;  \Big| \bigl(\sigma+(c_j-1)h,\varphi_j\bigr)\in \Omega,  \quad j=1,\dots,s+1  \Bigr\}$.

When \begin{emph_}one\end{emph_} step of the method~\eqref{TS_GLMs_shifted} is applied with stepsize $h$ to~\eqref{RFDEs} for the computation of the solution through
$(t_{n-1},z^{[n-1]})\in\Omegas{h}$ on $[t_{n-1},t_n]$, it yields a continuous function \linebreak[4] $z^{[n]}=z^{[n]}(h,t_{n-1},z^{[n-1]})$ in \Cs, where $z^{[n]\; i}     \approx y_{t_{n-1}+c_ih}, \;
i=1,\dots,{s+1},$ $z^{[n],\; {s+2}} \approx y(t_{n-1})$. \linebreak[3] Note that $z^{[n],\; {s+1}}=\eta^{[n]}_h \approx y_{t_{n}}$, since $c_{s+1}=1$. The approximation $z^{[n]}$ is defined for
stepsizes $h\in (0,\overline{h}\,)$, where $\overline{h}=\overline{h}(t_{n-1},z^{[n-1]})$, moreover $(t_{n},z^{[n]})\in\Omegas{h}$.

Let $z^{[0]}$ be computed by some starting method $S:\; z^{[0]}=S(h,t_0,\phi_{\,0})$. When \begin{emph_}several\end{emph_} steps of the method~\eqref{TS_GLMs_shifted} are applied to~\eqref{RFDEs} for
the computation of the solution through $(t_{0},z^{[0]})\in\Omegas{h}$ on $[t_0,T]$, they yield the finite sequence of continuous functions $\{ z^{[n]} \}_{n=1,\dots,N\;}$ in \Cs. These sequence is
defined for stepsizes $h\in (0,\overline{h}_N\,)$, where
$\overline{h}_N=\min\limits_{n=1,\dots,N\;} \overline{h}(t_{n-1},z^{[n-1]})$, moreover $(t_{n},z^{[n]})\in\Omegas{h}$ for all $n=1,\dots,N\;$.

\enlargethispage{0.5\baselineskip}
Rewrite the method~\eqref{TS_GLMs_shifted} in matrix-vector form. For this purpose we define \mathindent=0em \thickmuskip=4mu
\begin{alignat*}{2}
  &\widetilde{\mathcal{A}}(\omega)=
  \begin{pmatrix}
    \MyOverline{\widetilde{a}}_{11}\omegah      & \cdots  & \MyOverline{\widetilde{a}}_{1 s}\omegah     & 0      & 0      \\[-6pt]
    \vdots                                      &         & \vdots                                      & \vdots & \vdots \\[-9pt]
    \MyOverline{\widetilde{a}}_{s+1,\,1}\omegah & \cdots  & \MyOverline{\widetilde{a}}_{s+1,\,s}\omegah & 0      & 0      \\[-9pt]
    0                                           & \cdots  & 0                                           & 0      & 0      \\
  \end{pmatrix}, \;
  \mathcal{A}(\omega)=
  \begin{pmatrix}
    \MyOverline{a}_{11}\omegah      & \cdots  & \MyOverline{a}_{1 s}\omegah     & 0      & 0      \\[-6pt]
    \vdots                          &         & \vdots                          & \vdots & \vdots \\[-9pt]
    \MyOverline{a}_{s+1,\,1}\omegah & \cdots  & \MyOverline{a}_{s+1,\,s}\omegah & 0      & 0      \\[-9pt]
    0                                         & \cdots  & 0                     & 0      & 0      \\
  \end{pmatrix}, \\
  &M(\omega)=
  \begin{pmatrix}
    \ZeroMatrix   & \MyOverline{u}\,\omegah & e-\MyOverline{u}\,\omegah  \\
    {\mathbf 0}^T & 1                       & 0                          \\
  \end{pmatrix},\\
  &\varPhi(t_{n},z^{[n]},h) =\Bigl(f \bigl(t_{n}+(c_1-1)h,z^{[n]\; 1} \bigr),\dots,f \bigl(t_{n}+(c_s-1)h,z^{[n]\; s} \bigr),0,0 \Bigr)^T, &
\end{alignat*}
\mathindent=2.5em \thickmuskip=5mu
  where $e=\Bigl(1,\dots,1\Bigr)^T,\quad
    \MyOverline{u}\,\omegah=\Bigl(\MyOverline{u}_1\omegah,\dots,\MyOverline{u}_{s+1}\omegah\Bigr)^T,\quad$\\
    \ZeroMatrix is $(s+1)\times s$~\nobreakdash-~dimensional zero matrix.

Then the method~\eqref{TS_GLMs_shifted} can be represented in the form of $A$\nobreakdash-method~\cite{Albrecht_1,Albrecht_2}:
\begin{subequations}\label{TS_GLMs_matrix_form}
  \begin{alignat}{3}
    z^{[n]}(\omega)    &=M(\omega)z^{[n-1]}(0)                  & +\,& h\, \widetilde{\mathcal{A}}(\omega)\,\varPhi(t_{n-1},z^{[n-1]},h) & +\,& h\, \mathcal{A}(\omega)\,\varPhi(t_{n},z^{[n]},h), \label{TS_GLMs_matrix_form:z} \\
    z^{[n]\; i}(\omega)&=z^{[n-1],\; {s+1}}_{\, c_i h}(\omega), &  \,& \omega\in[-r,-c_ih],                                              &  \,& i=1,\dots ,s+1,                                    \label{TS_GLMs_matrix_form:z_extended}
  \end{alignat}
\end{subequations}
\mathindent=0em
\vspace{-1.5\baselineskip}
\begin{alignat*}{4}
  &\text{where } & z^{[n]\; i}(\omega)     \text{ is computed by}&~\eqref{TS_GLMs_matrix_form:z},                  &\quad &\text{if } \omega\in[-c_ih,0],  &\quad &               \\
  &              &                                      \text{by}&~\eqref{TS_GLMs_matrix_form:z_extended},         &      &\text{if } \omega\in[-r,-c_ih], &      & i=1,\dots ,s+1;\\
  &              & z^{[n],\; {s+2}}(\omega) \text{ is constant and computed by} &~\eqref{TS_GLMs_matrix_form:z} &      &\text{for all } \omega.
\end{alignat*}
\noindent Denote the exact value function of the method~\eqref{TS_GLMs_matrix_form} by $Z^{[n]}(\omega)$ and the global error function by $q^{[n]}(\omega):$
\begin{alignat*}{3}
  Z^{[n]}(\omega)&=\left( y_{\,t_{n}+(c_1-1)h}(\omega),\dots,y_{\,t_{n}+(c_s-1)h}(\omega), y_{\,t_{n}}(\omega),  y_{\,t_{n-1}}(0) \right)^T, & \quad &\omega\in[-r,0], & \quad &n=1,2,\dots \;, \\
  q^{[n]}(\omega)&=Z^{[n]}(\omega)-z^{[n]}(\omega),                                                                                          &       &\omega\in[-r,0], &       &n=1,2,\dots \;.
\end{alignat*}
\begin{MyDefinition} \label{def_convergence_matrix_form}
  The method~\eqref{TS_GLMs_matrix_form} has uniform order of convergence~$q$ if
  \mathindent=2.5em
  \begin{align*}
    \max\limits_{n=1,\dots,N} \norm{q^{[n]}}=O(h^q)     \qquad \text{ as } h \rightarrow 0
  \end{align*}
  and discrete order of convergence~$p$ if
  \begin{align*}
    \max\limits_{n=1,\dots,N} \norma{q^{[n]}(0)}=O(h^p) \quad\, \text{ as } h \rightarrow 0,
  \end{align*}
  where  $q$ and $p$ are positive integers.
\end{MyDefinition}

\noindent Denote by $hd^{[n]}$ the local discretization error function obtained as the residual function by replacing $z^{[n-1]},\; z^{[n]}$ in~\eqref{TS_GLMs_matrix_form} by the exact value
functions $Z^{[n-1]},\; Z^{[n]}$ correspondingly:
\begin{subequations}\label{TS_GLMs_matrix_form_residual}
  \begin{alignat}{2}
    Z^{[n]}(\omega)    &=M(\omega)Z^{[n-1]}(0)                 & &+h\, \widetilde{\mathcal{A}}(\omega)\,\varPhi(t_{n-1},Z^{[n-1]},h) +h\, \mathcal{A}(\omega)\,\varPhi(t_{n},Z^{[n]},h)   +hd^{[n]}(\omega),     \hspace{-5cm}    \label{TS_GLMs_matrix_form_residual:Z} \\
    Z^{[n]\; i}(\omega)&=Z^{[n-1],\; {s+1}}_{\, c_i h}(\omega) & &+hd^{[n]\; i}(\omega),                                              \qquad \omega\in[-r,-c_ih],                          \quad i=1,\dots ,s+1.                  \label{TS_GLMs_matrix_form_residual:Z_extended}
  \end{alignat}
\end{subequations}
\begin{MyDefinition}\label{def_consistency_matrix_form}
  \mathindent=2.5em
  The method~\eqref{TS_GLMs_matrix_form} has uniform order of consistency~$q$ if
  \begin{align*}
    \max\limits_{n=1,\dots,N} \norm{d^{[n]}}=O(h^q) \qquad \text{ as } h \rightarrow 0
  \end{align*}
  and discrete order of consistency~$p$ if
  \begin{align*}
    \max\limits_{n=1,\dots,N-1} \norma{d^{[n]}(0)}=O(h^p) \quad\, \text{ as } h \rightarrow 0.
  \end{align*}
\end{MyDefinition}

\noindent It follows from~\eqref{TS_GLMs_matrix_form_residual:Z_extended} that $d^{[n]\; i}(\omega)=0$ for $\omega\in[-r,-c_ih]$. Subtracting~\eqref{TS_GLMs_matrix_form}
from~\eqref{TS_GLMs_matrix_form_residual} we have
\begin{subequations}\label{TS_GLMs_matrix_form_global_error}
  \begin{alignat}{2}
    q^{[n]}(\omega)    &=M(\omega)q^{[n-1]}(0)                  +h\, \widetilde{\mathcal{A}}(\omega)\,\Delta\varPhi^{[n-1]}  +h\, \mathcal{A}(\omega)\,\Delta\varPhi^{[n]}   +hd^{[n]}(\omega), \label{TS_GLMs_matrix_form_global_error:q} \\
    q^{[n]\; i}(\omega)&=q^{[n-1],\; {s+1}}_{\, c_i h}(\omega),  \qquad \omega\in[-r,-c_ih],  \quad i=1,\dots ,s+1,                                                                             \label{TS_GLMs_matrix_form_global_error:q_extended}
  \end{alignat}
\end{subequations}
where $\Delta\varPhi^{[n]}=\varPhi(t_{n},Z^{[n]},h)-\varPhi(t_{n},z^{[n]},h)$.

\noindent \thickmuskip=4mu We further extend the shifted coefficient functions $\MyOverline{u}_i\omegah, \; \MyOverline{\widetilde{a}}_{ij}\omegah,\; \MyOverline{a}_{ij}\omegah$ \linebreak[3] to
$\omega\in[-r,-c_ih)$~by
\begin{align*}
  \MyOverline{u}_i\omegah=1, \quad \MyOverline{\widetilde{a}}_{ij}\omegah=0,\quad \MyOverline{a}_{ij}\omegah=0, \qquad \omega\in[-r,-c_ih), \ i=1,\dots,s+1, \ j=1,\dots,s.
\end{align*}
This extension is continuous, since $\MyOverline{u}_i\omegah \Bigr|_{\omega=-c_ih}=u_i(0)=1, \quad \MyOverline{\widetilde{a}}_{ij}\omegah\Bigr|_{\omega=-c_ih}=\widetilde{a}_{ij}(0)=0, \quad$
$\MyOverline{a}_{ij}\omegah\Bigr|_{\omega=-c_ih}=a_{ij}(0)=0, \qquad i=1,\dots,s+1, \ j=1,\dots,s.$ Thus, the elements of $M(\cdot)$, $\widetilde{\mathcal{A}}(\cdot)$, $\mathcal{A}(\cdot)$ are
continuous functions $[-r,0]\rightarrow {\mathbb R}$. From~\eqref{TS_GLMs_matrix_form_global_error:q} we have \thickmuskip=5mu
\begin{alignat}{4}\label{global_error_estimate_0}
  \norma{q^{[n]}(\omega)} &\le \norma{M(\omega)}\norma{q^{[n-1]}(0)} & &+h\, \norma{\widetilde{\mathcal{A}}(\omega)} \norma{\Delta\varPhi^{[n-1]}} & &  +h\, \norma{\mathcal{A}(\omega)}\norma{\Delta\varPhi^{[n]}} & &+h\norma{d^{[n]}(\omega)} \le      \\
                          &\le \norm{M}\norma{q^{[n-1]}(0)}          & &+h\, \norm{\widetilde{\mathcal{A}}}          \norma{\Delta\varPhi^{[n-1]}} & &  +h\, \norm{\mathcal{A}}\norma{\Delta\varPhi^{[n]}}          & &+h\norm{d^{[n]}},\; n=1,2,\dots \notag
\end{alignat}
It follows from~\eqref{TS_GLMs_matrix_form_global_error:q} for $\omega =0$ that
\begin{alignat}{3}\label{global_error_estimate_2}
  q^{[n-1]}(0) &= M^{\,n-1}(0)q^{[0]}(0) & &+h\, \sum\limits_{l=1}^{n-1} M^{\, n-1-l}(0) \widetilde{\mathcal{A}}(0) \Delta\varPhi^{[l-1]} &+ &\\
               &                         & &+h\, \sum\limits_{l=1}^{n-1} M^{\, n-1-l}(0) \mathcal{A}(0) \Delta\varPhi^{[l]}               &+ &h\, \sum\limits_{l=1}^{n-1} M^{\, n-1-l}(0) d^{[l]}(0),\; n=2,3,\dots \hspace{-20cm} \notag
\end{alignat}

\begin{MyDefinition}
  The TSRK method~\eqref{TS_GLMs_matrix_form} is called zero-stable if $M(0)$ is power bounded, i.e.
  \begin{align}\label{power_boundedness}
    \bigl( \exists \, C>0 \bigr): \qquad \norma{M^{\, l}(0)} \le C, \quad  l=1,2,\dots \;.
  \end{align}
\end{MyDefinition}

\begin{MyLemma}
  The method~\eqref{TS_GLMs_matrix_form} is zero-stable iff \; $0 \le \MyOverline{u}_{s+1}(0)<2$.
\end{MyLemma}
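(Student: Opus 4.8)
Since the entries of $M$ are scalars, $M(0)$ acts blockwise, and its power-boundedness is equivalent to that of the corresponding $(s+2)\times (s+2)$ real matrix, which I also denote $M(0)$. The plan is to reduce the power-boundedness of this matrix to the same property for a single $2\times 2$ matrix. First I would write $M(0)$ explicitly from the definition of $M(\omega)$: its first $s$ columns vanish, column $s+1$ equals $\bigl(\MyOverline{u}_1(0),\dots,\MyOverline{u}_{s+1}(0),1\bigr)^T$, and column $s+2$ equals $\bigl(1-\MyOverline{u}_1(0),\dots,1-\MyOverline{u}_{s+1}(0),0\bigr)^T$. Hence for every $x\in\mathbb R^{s+2}$ the vector $M(0)x$ depends only on the pair $(x_{s+1},x_{s+2})$: setting $\pi(x)=(x_{s+1},x_{s+2})^T$ and letting $G\colon\mathbb R^2\to\mathbb R^{s+2}$ be the linear map whose two columns are columns $s+1$ and $s+2$ of $M(0)$, one has $M(0)=G\circ\pi$, and a one-line computation gives $\pi\circ G=B$ with
\[
  B=\begin{pmatrix}\MyOverline{u}_{s+1}(0) & 1-\MyOverline{u}_{s+1}(0)\\ 1 & 0\end{pmatrix}.
\]

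From $M(0)=G\circ\pi$ and $\pi\circ G=B$ it follows by induction that $M^{\,l}(0)=G\,B^{\,l-1}\pi$ for $l\ge 1$, so $\norma{M^{\,l}(0)}\le C\,\norma{B^{\,l-1}}$ for a constant $C$ independent of $l$; conversely, lifting any $w\in\mathbb R^2$ to the zero-padded $x\in\mathbb R^{s+2}$ with $\pi(x)=w$ and $\norma{x}=\norma{w}$ gives $\norma{B^{\,l}w}=\norma{\pi(M^{\,l}(0)x)}\le\norma{M^{\,l}(0)}\,\norma{w}$, hence $\norma{B^{\,l}}\le\norma{M^{\,l}(0)}$. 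Therefore $M(0)$ is power bounded if and only if the sequence $\{B^{\,l}\}_{l\ge 1}$ is bounded, and the problem becomes purely $2\times 2$.

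Then I would compute the spectrum of $B$. Its characteristic polynomial factors as
\[
  \lambda^2-\MyOverline{u}_{s+1}(0)\,\lambda-\bigl(1-\MyOverline{u}_{s+1}(0)\bigr)=(\lambda-1)\bigl(\lambda-(\MyOverline{u}_{s+1}(0)-1)\bigr),
\]
so the eigenvalues are $1$ and $\MyOverline{u}_{s+1}(0)-1$. By the standard criterion, $\{B^{\,l}\}$ is bounded iff its spectral radius is $\le 1$ and every eigenvalue of modulus $1$ is semisimple. The first condition reads $|\MyOverline{u}_{s+1}(0)-1|\le 1$, i.e. $0\le\MyOverline{u}_{s+1}(0)\le 2$. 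The eigenvalue $1$ is simple, hence semisimple, unless it coincides with the second eigenvalue, i.e. unless $\MyOverline{u}_{s+1}(0)=2$; in that case $B-I\ne 0$ has rank $1$ while $(B-I)^2=0$, so $B^{\,l}=I+l(B-I)$ grows linearly and $\{B^{\,l}\}$ is unbounded. When $\MyOverline{u}_{s+1}(0)=0$ the eigenvalues are the distinct numbers $1$ and $-1$, so $B$ is power bounded. Collecting the cases, $\{B^{\,l}\}_{l\ge 1}$ is bounded precisely when $0\le\MyOverline{u}_{s+1}(0)<2$, which together with the previous paragraph proves the lemma.

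The bookkeeping (writing out $M(0)$, the identities $M(0)=G\circ\pi$ and $\pi\circ G=B$, and the elementary two-sided norm estimate) is routine; I expect the only delicate point to be the boundary value $\MyOverline{u}_{s+1}(0)=2$, where observing $\rho(B)=1$ is not enough and one must actually exhibit the nontrivial $2\times 2$ Jordan block, together with the symmetric remark that the other boundary value $\MyOverline{u}_{s+1}(0)=0$ is harmless because there the two unit-modulus eigenvalues are distinct. The conceptual core is the equivalence ``$M(0)$ power bounded $\iff$ $B$ power bounded'' coming from $M(0)=G\circ\pi$, $\pi\circ G=B$.
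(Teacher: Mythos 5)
Your proof is correct, and it reaches the paper's conclusion by a genuinely different decomposition. The paper works directly with the full $(s+2)\times(s+2)$ matrix: it invokes the root condition (power boundedness iff the minimal polynomial has all zeros in the closed unit disc and all multiple zeros in the open disc), computes the characteristic polynomial $\Delta(\lambda)=\lambda^{s}(\lambda-1)\bigl(\lambda-(v-1)\bigr)$ with $v=\MyOverline{u}_{s+1}(0)$, divides by the greatest common divisor $D_{s+1}(\lambda)=\lambda^{s-1}$ of the $(s+1)$\nobreakdash-order minors of $\lambda I-M(0)$ to obtain the minimal polynomial $\psi(\lambda)=\lambda(\lambda-1)\bigl(\lambda-(v-1)\bigr)$, and reads off the answer from the zeros $0,\,1,\,v-1$. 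You instead exploit the fact that $M(0)$ has rank at most two, factor it as $M(0)=G\circ\pi$ with $\pi\circ G=B$, and transfer power boundedness two-sidedly to the $2\times2$ matrix $B$, whose eigenvalues are the same relevant pair $1$ and $v-1$. What your route buys: you never have to compute (or take on faith) the gcd $D_{s+1}(\lambda)=\lambda^{s-1}$, which is the one step the paper asserts without justification, and the delicate boundary case $v=2$ is settled by an explicit Jordan-block computation ($B-I\neq0$, $(B-I)^2=0$, so $B^{\,l}=I+l(B-I)$ is unbounded) rather than by appeal to the multiple-zero clause of the root condition; you also correctly flag that $v=0$ is harmless because the two unit-modulus eigenvalues $1$ and $-1$ are then distinct. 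What the paper's route buys: it is shorter and stays within the standard minimal-polynomial formalism for $A$\nobreakdash-methods, and it keeps track of the eigenvalue $0$ explicitly, which your reduction discards along with the kernel of $\pi$ (legitimately, since that kernel contributes nothing to $M^{\,l}(0)$ for $l\ge1$). Both arguments are sound.
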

\begin{proof}
The condition~\eqref{power_boundedness} holds iff~\cite{Butcher}
\begin{enumerate}
  \item\label{root_cond_1}
  the minimal polynomial of $M(0)$ has all its zeros in the closed unit disc and
  \item\label{root_cond_2}
  all its multiple zeros in the open unit disc.
\end{enumerate}

{ \linespread{2} \normalsize
Denote by $\psi(\lambda)$ and 
$\Delta(\lambda)$ the minimal and characteristic polynomials of $M(0)$ correspondingly. Then $\psi(\lambda)=\dfrac{\Delta(\lambda)}{D_{s+1}(\lambda)}\, , \;$ where $D_{s+1}(\lambda)$ is the greatest
common divisor of $(s+1)$~\nobreakdash-~t order minors of the characteristic matrix $\lambda I-M(0)$.

} \vspace{-1\baselineskip} \mathindent=2.5em
\begin{align*}
  \Delta(\lambda)=\lambda^s(\lambda-1)(\lambda-(v-1)), \quad D_{s+1}(\lambda)=\lambda^{s-1}, \quad \text{where } v=\MyOverline{u}_{s+1}(0).
\end{align*}
Hence $\psi(\lambda)=\lambda(\lambda-1)(\lambda-(v-1))$ and zeros of the minimal polynomial $\psi(\lambda)$ are
\begin{align*}
  \lambda_1=0, \quad \lambda_2=1, \quad \lambda_3=v-1.
\end{align*}
The condition~\eqref{root_cond_1} holds iff $0 \le v \le 2$;~\eqref{root_cond_2} holds iff $v \ne 2$. Thus,~\eqref{power_boundedness} holds iff $0 \le v < 2$.
\end{proof}

\begin{MyTheorem}
  \thinmuskip=0mu
  \thickmuskip=4mu
  Assume that the method~\eqref{TS_GLMs_matrix_form} is zero-stable and
  the starting procedure $S$ for it, which specifies the starting value $z^{[0]}=S(h,t_0,\phi_{\,0})$, such that
  $\norm{q^{[0]}}=O(h^{p-1})$, $\norma{q^{[0]}(0)}=O(h^{p})$ as $h \rightarrow 0$.

  If the method has uniform order of consistency $p-1: \; \max\limits_{n=1,\dots,N}\norm{d^{[n]}}=O(h^{p-1})$
  and discrete order of consistency $p: \; \max\limits_{n=1,\dots,N-1} \norma{d^{[n]}(0)}=O(h^p)$ as $h \rightarrow 0$,
  then it has uniform order of convergence  $p:$ \linebreak[3] $\max\limits_{n=1,\dots,N} \norm{q^{[n]}}=O(h^p), \quad h \rightarrow 0$.
\end{MyTheorem}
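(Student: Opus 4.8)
The plan is to bootstrap from the local error recursion~\eqref{TS_GLMs_matrix_form_global_error} and the closed form~\eqref{global_error_estimate_2} at $\omega=0$, first establishing the discrete order of convergence and then upgrading to the uniform one. For the discrete estimate I would work in~\eqref{global_error_estimate_2}: by zero-stability we have $\norma{M^{\,l}(0)}\le C$ for all $l$, the Lipschitz condition~\eqref{Lipschitz_condition} applied through $\varPhi$ gives $\norma{\Delta\varPhi^{[l]}}\le L\norma{q^{[l]}}$ (componentwise $f$ is Lipschitz in the $\C$-norm, and the stage subvectors $z^{[n]\,i}$ are exactly shifted solution segments, so the relevant norm is $\norm{q^{[l]}}$), and the consistency hypotheses give $\norma{d^{[l]}(0)}=O(h^p)$ together with $\norma{q^{[0]}(0)}=O(h^p)$. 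Summing the $n-1\le N$ terms, each carrying a factor $h$, and using $Nh=T-t_0$, yields
\begin{align*}
  \norma{q^{[n-1]}(0)} \le C\norma{q^{[0]}(0)} + CLh\sum_{l=1}^{n-1}\bigl(\norm{q^{[l-1]}}+\norm{q^{[l]}}\bigr) + O(h^{p-1}\cdot h) = O(h^p) + CLh\sum_{l=0}^{n-1}\norm{q^{[l]}}.
\end{align*}

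Next I would convert the full (uniform) error~\eqref{global_error_estimate_0} into a bound on $\norm{q^{[n]}}$. Taking the maximum over $\omega\in[-r,0]$ on the left, and using~\eqref{TS_GLMs_matrix_form_global_error:q_extended} which says that on $[-r,-c_ih]$ the error component equals a shifted copy of $q^{[n-1],\,s+1}$, one gets $\norm{q^{[n]}}\le \max\{\norm{M}\,\norma{q^{[n-1]}(0)} + h\norm{\widetilde{\mathcal{A}}}\,L\norm{q^{[n-1]}} + h\norm{\mathcal{A}}\,L\norm{q^{[n]}} + h\norm{d^{[n]}},\ \norm{q^{[n-1]}}\}$, where $\norm{M},\norm{\widetilde{\mathcal{A}}},\norm{\mathcal{A}}$ are finite because their entries are continuous on the compact $[-r,0]$ (the extension discussed right before~\eqref{global_error_estimate_0} makes this precise). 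For $h$ small enough the term $h\norm{\mathcal{A}}L<\tfrac12$, so it can be absorbed into the left side. Substituting the discrete bound just derived for $\norma{q^{[n-1]}(0)}$, using the uniform consistency hypothesis $h\norm{d^{[n]}}=O(h^p)$, and writing $Q_n:=\max_{0\le m\le n}\norm{q^{[m]}}$, I obtain an inequality of the form $Q_n \le A\,O(h^p) + B h\sum_{l=0}^{n}Q_l$ for constants $A,B$ independent of $h$ and $n$; note the starting-value hypothesis $\norm{q^{[0]}}=O(h^{p-1})$ is, after one application of this recursion, already subsumed.

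Finally I would close the argument with a discrete Gronwall inequality: from $Q_n\le \gamma h^p + \beta h\sum_{l=0}^{n}Q_l$ one deduces (absorbing the $l=n$ term for small $h$, then iterating) $Q_n \le \gamma h^p\, e^{\beta T}$ uniformly for $n=1,\dots,N$, which is exactly $\max_{n}\norm{q^{[n]}}=O(h^p)$ as $h\to0$. I expect the main obstacle to be bookkeeping rather than conceptual: one must be careful that the Lipschitz estimate on $\Delta\varPhi^{[l]}$ really produces $\norm{q^{[l]}}$ and not just a discrete norm — this works because each active component of $\varPhi$ is $f$ evaluated at the segment $z^{[l]\,i}$ versus $Z^{[l]\,i}$, and $f$'s Lipschitz constant is with respect to $\norma{\cdot}_\C$, so the difference is bounded by $L\max_i\norma{q^{[l]\,i}}_\C\le L\norm{q^{[l]}}$ — and that the two separate consistency orders ($p-1$ uniform, $p$ discrete) are combined correctly: the lower-order uniform residual enters $Q_n$ only multiplied by an extra $h$ through the recursion's $h\norm{d^{[n]}}$ term, which is why a uniform consistency order of merely $p-1$ still delivers uniform convergence order $p$.
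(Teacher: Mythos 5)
Your proposal follows essentially the same route as the paper: the same Lipschitz bound $\norma{\Delta\varPhi^{[l]}}\le L\norm{q^{[l]}}$, the same use of zero-stability in the power-bounded recursion~\eqref{global_error_estimate_2} for the discrete error at $\omega=0$, substitution into~\eqref{global_error_estimate_0} with $h\sum\norma{d^{[l]}(0)}\le(T-t_0)\max\norma{d^{[l]}(0)}$ and $h\norm{d^{[n]}}=O(h^p)$, followed by a discrete Gronwall step (the paper solves the equivalent recursion $K_n=(1-Dh)^{-1}K_{n-1}$ explicitly). Your observation that the weaker starting hypothesis $\norm{q^{[0]}}=O(h^{p-1})$ is subsumed after one application of the recursion, since it only enters multiplied by $h$, is in fact slightly more careful than the paper, which at the corresponding point asserts $\norm{q^{[0]}}\le\widetilde{\widetilde{C}}_0\,h^p$ directly.
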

\begin{proof}
Since the function $f$ satisfies the Lipschitz condition~\eqref{Lipschitz_condition} with respect to the second argument with constant $L$, we have

\medmuskip=3mu \thickmuskip=4mu
\begin{align*}
  \norma{\Delta\Phi^{[n]}_i}_\R     &=\norma{\Phi_i(t_n,Z^{[n]},h)-\Phi_i(t_n,z^{[n]},h)}_\R=\norma{f(t_{n}+(c_i-1)h,Z^{[n] \; i})-\\
                                    &-f(t_{n}+(c_i-1)h,z^{[n] \; i})}_\R \le L \norma{Z^{[n]\; i}-z^{[n]\; i}}_\C=L \norma{q^{[n]\; i}}_\C \le L \norm{q^{[n]}}, \quad i=1,\dots,s,\\
  \norma{\Delta\Phi^{[n]}_{s+1}}_\R &=\norma{\Delta\Phi^{[n]}_{s+2}}_\R=0.
\end{align*}
\thinmuskip=3mu \medmuskip=4mu \thickmuskip=5mu Hence
\begin{align}\label{Delta_Phi_Lipschitz_condition}
  \norma{\Delta\Phi^{[n]}}\le L \norm{q^{[n]}}.
\end{align}

The method~\eqref{TS_GLMs_matrix_form} is zero-stable, i.e.~\eqref{power_boundedness} holds, where it can be assumed without loss of generality that $C \ge 1$. Denote $K=CL$. It follows
from~\eqref{global_error_estimate_2},~\eqref{power_boundedness} and~\eqref{Delta_Phi_Lipschitz_condition} that
\begin{alignat*}{4}
  \norma{q^{[n-1]}(0)} &\le    C \norma{q^{[0]}(0)} & &+h\, K \norma{\widetilde{\mathcal{A}}(0)} \sum\limits_{l=1}^{n-1} \norm{q^{[l-1]}} & +& \\
                       &                            & &+h\, K \norma{\mathcal{A}(0)}             \sum\limits_{l=1}^{n-1} \norm{q^{[l]}}   & +&\, h\,C \sum\limits_{l=1}^{n-1} \norma{d^{[l]}(0)}, \qquad n=1,2,\dots \; ,
\end{alignat*}
where we assume that the sum is zero if the lower summation index exceeds the upper one.
\linebreak[3]
By~\eqref{global_error_estimate_0} and~\eqref{Delta_Phi_Lipschitz_condition} it follows that
\begin{alignat}{2}\label{global_error_estimate_3}
  \norma{q^{[n]}(\omega)} &\le \norm{M}       \Bigl( C \norma{q^{[0]}(0)} + h\, K \norma{\widetilde{\mathcal{A}}(0)}  \norm{q^{[0]}}\Bigr) & +&h K \norm{M} \Bigl( \norma{\widetilde{\mathcal{A}}(0)}+\norma{\mathcal{A}(0)} \Bigr) \sum\limits_{l=1}^{n-2} \norm{q^{[l]}}+ \notag\\
                          &+h\,\Bigl( K \norm{M}\norma{\mathcal{A}(0)}+L\norm{\widetilde{\mathcal{A}}} \Bigr)\norm{q^{[n-1]}}              & +&h L \norm{\mathcal{A}}\norm{q^{[n]}}+\\
                          &+h\, C \norm{M}\sum\limits_{l=1}^{n-1} \norma{d^{[l]}(0)} +h\,\norm{d^{[n]}},                                   &  & \qquad\qquad\qquad\qquad\qquad\qquad n=1,2,\dots \notag
\end{alignat}

  Consider~\eqref{global_error_estimate_3} for $n=1,\dots,N$. Since $h\,\sum\limits_{l=1}^{n-1} \norma{d^{[l]}(0)} \le h\,(n-1) \max\limits_{l=1,\dots,N-1}\norma{d^{[l]}(0)}$
  \linebreak[3] and \linebreak[4] $h\,(n-1) \le h\, N = T-t_0$ we obtain $h
  \sum\limits_{l=1}^{n-1} \norma{d^{[l]}(0)}=O(h^p)$ as $h \rightarrow 0,\quad n=1,\dots,N$.

  Moreover,
  \mathindent=2.5em
  \begin{alignat*}{2}
    \norma{q^{[n]}(\omega)} &\le \widetilde{C}_0\, h^p+  D\,h\sum\limits_{l=1}^{n} \norm{q^{[l]}}, \qquad n=1,\dots,N \quad \text{for sufficiently small } h,
  \end{alignat*}
  where $\widetilde{C}_0>0,\; D>0$.

  By hypothesis, we have that $\norm{q^{[0]}} \le \widetilde{\widetilde{C}}_0 \, h^p$ for some $\widetilde{\widetilde{C}}_0>0$ and sufficiently small $h$.
  Denote ${C}_0=\max \Bigl\{ \widetilde{C}_0,\widetilde{\widetilde{C}}_0 \Bigr\}$. Then
  \begin{alignat}{2}\label{global_error_estimate_5}
    \norma{q^{[n]}(\omega)} &\le C_0\, h^p+  D\,h\sum\limits_{l=1}^{n} \norm{q^{[l]}}, \qquad n=0,\dots,N \quad \text{for sufficiently small } h.
  \end{alignat}
  Let $K_n$ be defined by
  \begin{align}\label{Kn}
    K_n=C_0 h^p+  D\,h\sum\limits_{l=1}^{n} K_l, \qquad n=0,1,2,\dots \;.
  \end{align}
  Using~\eqref{global_error_estimate_5},~\eqref{Kn} and~\eqref{TS_GLMs_matrix_form_global_error:q_extended}
  it is easy to prove by induction that
  \begin{align}
    \norm{q^{[n]}} \le K_n, \qquad  n=0,\dots,N \quad \text{for sufficiently small } h\in (0,\overline{h}\,),\; \overline{h} < D^{-1}.
  \end{align}

  Finally, it follows from~\eqref{Kn} that $K_n=(1-Dh)^{-1} K_{n-1}=\dots=(1-Dh)^{-n} K_0$, where
  \linebreak[4]
  $(1-Dh)^{-n} = \Bigl(1+\dfrac{Dh}{1-Dh} \Bigr)^n \le \exp \Bigl(\dfrac{D nh}{1-Dh}\Bigr) \le \exp \Bigl(\dfrac{D(T-t_0)}{1-D\overline{h}}\Bigr)$.
  Hence
  \begin{align}
    K_n \le C_0 \exp \Bigl(\dfrac{D(T-t_0)}{1-D\overline{h}}\Bigr) \, h^p, \qquad  n=0,\dots,N, \quad  h\in (0,\overline{h}\,),
  \end{align}
  which concludes the proof.
\end{proof}

\section{Order conditions}\label{sec_Order cond}
Assume that $f$ is of class $C^l$ with respect to the second argument for a sufficiently large $l$ and solution $y(t)$ of~\eqref{RFDEs} is of piecewise class $C^m$ for a sufficiently large $m$. Let
$c_1^*,\dots,c_{s^*}^*$ such that $c_1^*<c_2^*<\dots<c_{s^*}^*$ and $\{c_1^*,\dots,c_{s^*}^*\}=\{c_1,\dots,c_s \}$, i.e. $c_i^*$ are distinct $c_i$ in increasing order.

  Observe that convergence and consistency of the method~\eqref{TS_GLMs_matrix_form}
  in Definitions~\autoref{def_convergence_matrix_form} and~\autoref{def_consistency_matrix_form}
  means \begin{emph_}stage\end{emph_} convergence and consistency of the corresponding method~\eqref{TS_GLMs}.
  Now we consider weaker definitions.

\begin{MyDefinition}
  \mathindent=2.5em
  The method~\eqref{TS_GLMs} has uniform order of consistency~$q$ if
  \begin{align*}
    \max\limits_{n=1,\dots,N}   \norma{d^{[n]\; s+1}}_\C=O(h^q)      \qquad\qquad \text{ as } h \rightarrow 0
  \end{align*}
  and discrete order of consistency~$p$ if
  \begin{align*}
    \max\limits_{n=1,\dots,N-1} \norma{d^{[n]\; s+1}(0)}_\R=O(h^p) \,\quad\, \text{ as } h \rightarrow 0.
  \end{align*}
\end{MyDefinition}
\begin{MyDefinition}
  The method~\eqref{TS_GLMs} has uniform order of convergence~$q$ if
  for the corresponding method~\eqref{TS_GLMs_matrix_form} the following condition holds:
  \mathindent=2.5em
  \begin{align*}
    \max\limits_{n=1,\dots,N} \norma{q^{[n]\; s+1}}_\C=O(h^q)     \;\;\,\qquad \text{ as } h \rightarrow 0
  \end{align*}
  and discrete order of convergence~$p$ if
  \begin{align*}
    \max\limits_{n=1,\dots,N} \norma{q^{[n]\; s+1}(0)}_\R=O(h^p)  \quad \text{ as } h \rightarrow 0.
  \end{align*}
\end{MyDefinition}

\begin{MyLemma}\label{local_error_cond}
  Let $p$ be a positive integer. If $y$ is of piecewise class $C^{p}$ then
  the local discretization error function $hd^{[n]}$ satisfy
  \mathindent=0em
  \begin{align}\label{local_error}
    &hd^{\, [n] \; i}_{-c_ih}(\alpha h) =-\sum\limits_{k=1}^{p-1} \Gamma_{ik}(\alpha) y^{(k)}(t_{n-1})h^k +O(h^{p}),\quad \alpha\in[0,c_i],\quad i=1,\dots,s+1,
    \intertext{where $\Gamma_{ik}:\ [0,c_i]\rightarrow \mathbb{R}$ are polynomial functions given by} \notag
  \end{align}
  \vspace{-2.5\baselineskip}
  \begin{alignat}{2}\label{Gamma_i}
    \Gamma_{ik}(\alpha)=\frac{1}{(k-1)!}\biggl[\frac{(1-u_i(\alpha))(-1)^k}{k}  &+            \sum\limits_{j=1}^{s}{\widetilde{a}_{ij}}(\alpha)(-(1-c_j))^{k-1} & &+\sum\limits_{j=1}^{s} a_{ij}(\alpha)c_j^{k-1} -\frac{\alpha^k}{k} \biggr],\\
                                                                                &\hphantom{+} \alpha\in[0,c_i],                                                 & &\hphantom{+}i=1,\dots,s+1.\notag
  \end{alignat}
\end{MyLemma}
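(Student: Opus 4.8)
The plan is to read off the $i$-th scalar component of the residual relation~\eqref{TS_GLMs_matrix_form_residual:Z}, specialize its argument to $\omega=(\alpha-c_i)h$, and Taylor-expand around $t_{n-1}$. First note that $hd^{\,[n]\;i}_{-c_ih}(\alpha h)=hd^{\,[n]\;i}\bigl((\alpha-c_i)h\bigr)$, so with $\omega=(\alpha-c_i)h$ we have $\omega/h=\alpha-c_i$ and, by the definition~\eqref{shifted_coeff} of the shifted coefficient functions (extended to $i=s+1$ via~\eqref{coeff_notations}), $\MyOverline{u}_i(\omega/h)=u_i(\alpha)$, $\MyOverline{\widetilde{a}}_{ij}(\omega/h)=\widetilde{a}_{ij}(\alpha)$, $\MyOverline{a}_{ij}(\omega/h)=a_{ij}(\alpha)$. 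Using the block forms of $M$, $\widetilde{\mathcal{A}}$, $\mathcal{A}$, the definition of $\varPhi$, and the identity $t_{n-1}+(c_j-1)h=t_{n-2}+c_jh$, the $i$-th component of~\eqref{TS_GLMs_matrix_form_residual:Z} becomes
\[
 Z^{[n]\;i}(\omega)=u_i(\alpha)\,y(t_{n-1})+\bigl(1-u_i(\alpha)\bigr)y(t_{n-2})+h\sum_{j=1}^{s}\widetilde{a}_{ij}(\alpha)\,f\!\left(t_{n-2}+c_jh,\,Z^{[n-1]\;j}\right)+h\sum_{j=1}^{s}a_{ij}(\alpha)\,f\!\left(t_{n-1}+c_jh,\,Z^{[n]\;j}\right)+hd^{\,[n]\;i}(\omega).
\]
The key point is that every quantity on the right is an \emph{exact} value of $y$: since $Z^{[n-1]\;j}=y_{\,t_{n-1}+(c_j-1)h}$ and $Z^{[n]\;j}=y_{\,t_{n-1}+c_jh}$, the RFDE~\eqref{RFDEs} gives $f\!\left(t_{n-2}+c_jh,Z^{[n-1]\;j}\right)=y'(t_{n-1}+(c_j-1)h)$ and $f\!\left(t_{n-1}+c_jh,Z^{[n]\;j}\right)=y'(t_{n-1}+c_jh)$; moreover $Z^{[n-1],\,s+1}(0)=y(t_{n-1})$, $Z^{[n-1],\,s+2}=y(t_{n-2})$, and $Z^{[n]\;i}\bigl((\alpha-c_i)h\bigr)=y(t_{n-1}+\alpha h)$ for every $i=1,\dots,s+1$ (using $c_{s+1}=1$). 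Hence $hd^{\,[n]\;i}_{-c_ih}(\alpha h)$ equals $y(t_{n-1}+\alpha h)$ minus the affine-in-$y$ combination above.

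Next I would insert the Taylor expansions at $t_{n-1}$: $y(t_{n-1}+\alpha h)=\sum_{k\ge0}\tfrac{\alpha^k}{k!}y^{(k)}(t_{n-1})h^k$, $\ y(t_{n-2})=\sum_{k\ge0}\tfrac{(-1)^k}{k!}y^{(k)}(t_{n-1})h^k$, and $h\,y'(t_{n-1}+\beta h)=\sum_{k\ge1}\tfrac{\beta^{k-1}}{(k-1)!}y^{(k)}(t_{n-1})h^k$ (with $\beta=c_j-1$ resp. $\beta=c_j$), each truncated at $k=p-1$ with remainder $O(h^p)$ because $y$ is piecewise $C^p$. Collecting powers of $h$: the $h^0$ term vanishes since $1-(1-u_i(\alpha))-u_i(\alpha)=0$, and for $1\le k\le p-1$ the coefficient of $y^{(k)}(t_{n-1})h^k$ is
\[
 \frac{\alpha^k}{k!}-\bigl(1-u_i(\alpha)\bigr)\frac{(-1)^k}{k!}-\sum_{j=1}^{s}\widetilde{a}_{ij}(\alpha)\frac{(c_j-1)^{k-1}}{(k-1)!}-\sum_{j=1}^{s}a_{ij}(\alpha)\frac{c_j^{k-1}}{(k-1)!},
\]
which, on factoring out $1/(k-1)!$, using $1/k!=1/\bigl(k(k-1)!\bigr)$ and $(c_j-1)^{k-1}=(-(1-c_j))^{k-1}$, is precisely $-\Gamma_{ik}(\alpha)$ with $\Gamma_{ik}$ as in~\eqref{Gamma_i}; these are polynomials in $\alpha$ because $u_i,\widetilde{a}_{ij},a_{ij}$ are. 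This gives~\eqref{local_error}.

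The Taylor bookkeeping is routine; the one genuinely delicate point is the legitimacy of the $O(h^p)$ remainders, which requires $y$ to be of class $C^p$ on an interval containing all arguments $t_{n-2},\dots,t_{n-1}+c_ih$ occurring above — i.e. on steps whose span avoids the breaking points of the solution. Steps whose span contains a breaking point must be handled separately (they are finite in number, or are excluded by taking breaking points as mesh points of $\Delta_N$), and it is this issue, not any computation, that makes real use of the ``piecewise $C^p$'' hypothesis.
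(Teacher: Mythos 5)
Your proof is correct and follows exactly the route the paper sketches: read off the $i$-th component of~\eqref{TS_GLMs_matrix_form_residual:Z}, substitute $\omega=(\alpha-c_i)h$ so that the shifted coefficients~\eqref{shifted_coeff} become $u_i(\alpha),\widetilde{a}_{ij}(\alpha),a_{ij}(\alpha)$, replace the $f$-evaluations by exact derivatives of $y$ via~\eqref{RFDEs}, and Taylor-expand about $t_{n-1}$; your coefficient bookkeeping reproduces~\eqref{Gamma_i} correctly. The only difference is that you spell out the details the paper omits, including the (valid) caveat about steps straddling breaking points where the piecewise-$C^p$ hypothesis is genuinely needed.
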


\begin{proof}
  Consider $hd^{[n]\;i}$ defined by~\eqref{TS_GLMs_matrix_form_residual:Z} for $\omega\in[-c_ih,0]$,
  make the change of the independent variable: $\omega=(\alpha -c_i) h$
  and use~\eqref{shifted_coeff}.
  The proof follows by Taylor series expansion about~$t_{n-1}$.
\end{proof}

\noindent For convenience we denote $\Gamma_{k}:=\Gamma_{s+1,\;k}$. Using~\eqref{coeff_notations} and~\eqref{Gamma_i} we obtain \mathindent=0em
\begin{alignat}{2}\label{Gamma}
  \Gamma_{k}(\alpha)=\frac{1}{(k-1)!}\biggl[\frac{(1-v(\alpha))(-1)^k}{k} &+\sum\limits_{j=1}^{s}{\widetilde{b}_{j}}(\alpha)(-(1-c_j))^{k-1} & &+\sum\limits_{j=1}^{s} b_{j}(\alpha)c_j^{k-1} -\frac{\alpha^k}{k} \biggr],\\
                                                                          &\hphantom{+}\alpha\in[0,h].  \notag
\end{alignat}
\begin{MyRemark}
  If the conditions~\eqref{one_step_conditions} hold the $\Gamma_{ik},\ \Gamma_{k}$

  are the same as for the one-step RK method~\cite{Maset_Zennaro,Maset}.
\end{MyRemark}
In the following we assume that the TSRK method satisfies the conditions $\Gamma_1=0$ and\linebreak[4] $\Gamma_{i1}=0,\; i=1,\dots,s,$ that is
\begin{alignat}{6}\label{order_1_cond}
  &v(\alpha)-1   & &+\sum\limits_{j=1}^{s}{\widetilde{b}_j}   (\alpha) & &+\sum\limits_{j=1}^{s} b_j(\alpha)    & &\ =\alpha, &\qquad &\alpha\in[0,1],\notag\\
  \\
  &u_i(\alpha)-1 & &+\sum\limits_{j=1}^{s}{\widetilde{a}_{ij}}(\alpha) & &+\sum\limits_{j=1}^{s} a_{ij}(\alpha) & &\ =\alpha, &       &\alpha\in[0,c_i],\quad i=1,\dots,s.\notag
\end{alignat}
The above condition is an equivalent form of uniform stage order \begin{emph_}one\end{emph_} condition.

The proofs of the Theorems~\ref{order_2_cond},~\ref{order_3_cond},~\ref{order_4_cond} are not difficult but rather technical.
We omit them for the sake of brevity.

\begin{MyTheorem}\label{order_2_cond}
  The TSRK method satisfying~\eqref{order_1_cond} has
  uniform order two iff $\ \Gamma_2=0$.
\end{MyTheorem}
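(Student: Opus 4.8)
The plan is to obtain the statement by combining the local error expansion of Lemma~\ref{local_error_cond} with the error-propagation identities~\eqref{global_error_estimate_2} and~\eqref{TS_GLMs_matrix_form_global_error:q}, everything specialised to the $(s+1)$-th component of~\eqref{TS_GLMs_matrix_form}, and assuming a starting procedure with $\norm{q^{[0]}}=O(h)$, $\norma{q^{[0]}(0)}_\R=O(h^2)$ (the accuracy the convergence Theorem demands for order two). The first observation is what the standing hypothesis~\eqref{order_1_cond} already buys. Applying Lemma~\ref{local_error_cond} with $p=3$ and using~\eqref{Gamma_i}, \eqref{Gamma}, the conditions $\Gamma_{i1}\equiv 0$ ($i=1,\dots,s$) and $\Gamma_1\equiv 0$ collapse the local discretization error to
\[ hd^{\,[n]\;i}_{-c_ih}(\alpha h)=-\,\Gamma_{i2}(\alpha)\,y''(t_{n-1})\,h^2+O(h^3),\qquad i=1,\dots,s+1, \]
with $\Gamma_{s+1,2}=\Gamma_2$; hence $\norm{d^{[n]}}=O(h)$, and the convergence Theorem with $p=1$ already yields $\max_n\norm{q^{[n]}}=O(h)$. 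This is the level from which the argument bootstraps.

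For sufficiency, assume $\Gamma_2\equiv 0$, so that $hd^{\,[n]\;s+1}=O(h^3)$ uniformly, $d^{\,[n]\;s+1}(0)=O(h^2)$, and $d^{\,[n]\;s+2}\equiv 0$. I would treat separately the discrete quantity $\gamma_n:=\norma{q^{[n]\;s+1}(0)}_\R$ and the \emph{fresh} part of $q^{[n]\;s+1}$ on $[-h,0]$. For $\gamma_n$ one starts from~\eqref{global_error_estimate_2}, using that $M(0)$ ignores the stage components, so that for $k\ge 1$ the product $M^{k}(0)\,d^{[l]}(0)$ involves only $d^{\,[l]\;s+1}(0)=O(h^2)$ and $d^{\,[l]\;s+2}(0)=0$; combining this with zero-stability~\eqref{power_boundedness}, the bound $\norma{\Delta\varPhi^{[n]}}\le L\norm{q^{[n]}}$, and the estimate $\norm{q^{[l]}}\le C(\max_{m<l}\gamma_m+h^2)$ (read off from rows $1,\dots,s,s+2$ of~\eqref{TS_GLMs_matrix_form_global_error:q} together with the $O(h^2)$ local stage errors above), one arrives at
\[ \gamma_n\le C\,\norma{q^{[0]}(0)}_\R+C'\,h\sum_{l<n}\max_{m\le l}\gamma_m+O(h^2), \]
to which the discrete Gronwall argument of the convergence Theorem applies, giving $\gamma_n=O(h^2)$. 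Feeding $\gamma_n=O(h^2)$ and $\norma{\Delta\varPhi^{[n]}}\le L\norm{q^{[n]}}$ back into the $(s+1)$-th row of~\eqref{TS_GLMs_matrix_form_global_error:q} for $\omega\in[-h,0]$, and using $hd^{\,[n]\;s+1}=O(h^3)$, gives $\sup_{\omega\in[-h,0]}\norma{q^{[n]\;s+1}(\omega)}_\R=O(h^2)$; since on $[-r,-h]$ this component merely reproduces, shifted, the $(s+1)$-th component of $q^{[n-1]}$ by~\eqref{TS_GLMs_matrix_form_global_error:q_extended}, one concludes $\max_n\norma{q^{[n]\;s+1}}_\C=O(h^2)$, i.e.\ uniform order two. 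The point that makes this work is structural: the stage components $q^{[n]\;i}$ ($i\le s$), controlled only through the possibly non-vanishing $\Gamma_{i2}$, enter the $(s+1)$-th equation exclusively through $h\,\widetilde{\mathcal{A}}(\omega)\Delta\varPhi^{[n-1]}$ and $h\,\mathcal{A}(\omega)\Delta\varPhi^{[n]}$, i.e.\ with an extra factor $h$, so their accumulated contribution stays $O(h^2)$.

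For necessity, I would test the method on a scalar problem with $y''\not\equiv 0$, for instance $y'(t)=y(t)$ (that is $r=0$, $f(t,\varphi)=\varphi(0)$), started exactly. Propagating~\eqref{TS_GLMs_matrix_form_global_error:q} with the expansion above, the leading error of the $(s+1)$-th component is governed by the contributions $-\,\Gamma_2(\cdot)\,y''(t_{n-1})\,h^2$; these are not shielded by an extra factor $h$, so a non-vanishing $\Gamma_2$ produces an $O(h)$ term in $\max_n\norma{q^{[n]\;s+1}}_\C$, contradicting uniform order two. Together with $\Gamma_1\equiv 0$, this forces $\Gamma_2\equiv 0$.

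The step I expect to be the main obstacle is the bookkeeping in the sufficiency part: keeping the discrete recursion for $\gamma_n$ (where power-boundedness of $M(0)$ is used and the stage data drop out) cleanly separated from the continuous estimates for $\norma{q^{[n]\;s+1}}_\C$ and for the stage functions, and tracking every $h$-factor so that the remainder ends up $O(h^2)$ rather than $O(h)$. Once the inequalities have the displayed shape the Gronwall step is routine; assembling them is exactly the technical content the authors choose not to spell out.
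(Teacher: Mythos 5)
First, be aware that the paper itself offers no proof to compare against: it states explicitly that the proofs of Theorems~\ref{order_2_cond}--\ref{order_4_cond} are ``not difficult but rather technical'' and omits them. Judged on its own terms, the \emph{sufficiency} half of your plan is sound and uses exactly the machinery the paper sets up: Lemma~\ref{local_error_cond} with $\Gamma_1=\Gamma_{i1}=0$ and $\Gamma_2=0$; the recursion~\eqref{global_error_estimate_2}, in which $M(0)$ depends only on the $(s{+}1)$-th and $(s{+}2)$-th components of its argument; the facts that $d^{[n]\,s+2}=0$ and that the stage residuals governed by $\Gamma_{i2}$ enter the output row only through $h\widetilde{\mathcal{A}}\Delta\varPhi^{[n-1]}+h\mathcal{A}\Delta\varPhi^{[n]}$; and the Gronwall step already carried out in the convergence theorem. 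One presentational caveat: the bound $\norm{q^{[l]}}\le C(\max_{m<l}\gamma_m+h^2)$ is not available \emph{before} the $\gamma$-recursion is solved, because on $[-r,-c_ih]$ the components $q^{[l]\,i}$ reproduce $q^{[l-1],\,s+1}$ as a \emph{function}, so they are controlled by $\norma{q^{[l-1],\,s+1}}_\C$ and not merely by the mesh values $\gamma_m$; the discrete and the continuous estimates must therefore be run as one simultaneous induction on $n$, not sequentially.

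The \emph{necessity} half has a genuine gap. Your test-problem argument shows that the accumulated part of the error is driven by the endpoint residuals $d^{[l]\,s+1}(0)$, i.e.\ by $\Gamma_2(1)$, and it does prove that $\Gamma_2(1)\ne0$ destroys order two. It does not rule out the case $\Gamma_2(1)=0$ with $\Gamma_2\not\equiv0$: there the discrete local errors are $O(h^3)$ and accumulate to $O(h^2)$, while the interior values $\Gamma_2(\alpha)$, $\alpha\in(0,1)$, occur only in the non-accumulated fresh term $-\Gamma_2(\alpha)y''(t_{n-1})h^2$ on $[-h,0]$ and feed back into later steps only through $f$, hence with an extra factor $h$; both contributions are already $O(h^2)$, which is compatible with uniform order two in the sense of the \emph{convergence} definition of Section~\ref{sec_Order cond}. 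So your argument forces only $\Gamma_2(1)=0$, not $\Gamma_2\equiv0$, and the claim that ``a non-vanishing $\Gamma_2$ produces an $O(h)$ term'' is false when $\Gamma_2$ vanishes at $\alpha=1$ but not identically. The ``iff'' is clean if ``uniform order two'' is read as uniform order of \emph{consistency} two (the first definition of Section~\ref{sec_Order cond}): then necessity is immediate from Lemma~\ref{local_error_cond}, since $\max_n\norma{d^{[n]\,s+1}}_\C=O(h^2)$ forces $\sup_{\alpha\in[0,1]}\norma{\Gamma_2(\alpha)\,y''(t_{n-1})}_\R\,h=O(h^2)$ and hence $\Gamma_2\equiv0$ for any problem with $y''$ bounded away from zero. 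You should either adopt that reading (which also matches the statement of Theorem~\ref{stage_order}), or supply the missing argument --- or a counterexample --- for the convergence reading.
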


\begin{MyTheorem}\label{order_3_cond}
  Let the TSRK method satisfy~\eqref{order_1_cond} and have uniform order two.

\noindent
  If $\ \Gamma_3=0\ $ and $\sum\limits_{\substack{i=1\\ c_i=c_m^*}}^s {b_i(\alpha) \Gamma_{i\, 2}(\beta)}=0,\quad \alpha\in[0,1],\quad \beta\in[0,c_m^*],\quad m=1,\dots,s^*$

\noindent
  then the method has uniform order three.
\end{MyTheorem}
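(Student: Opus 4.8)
The plan is to re‑run the error‑propagation argument of the convergence theorem proved above, but now only for the output component $q^{[n]\,s+1}$ and keeping one Taylor term more than for order two, so that the two hypotheses can be used to remove the spurious $O(h^{3})$ terms from the forcing.

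\textbf{Step 1 (local error).} Apply Lemma~\ref{local_error_cond} with $p=4$. By~\eqref{order_1_cond} all $\Gamma_{i1}$ vanish, and for $i=s+1$ the hypotheses $\Gamma_2=0$, $\Gamma_3=0$ together with~\eqref{Gamma} give $hd^{[n]\,s+1}_{-h}(\alpha h)=O(h^4)$ uniformly in $\alpha\in[0,1]$, hence $\max_n\norma{d^{[n]\,s+1}}_\C=O(h^3)$ and $\max_n\norma{d^{[n]\,s+1}(0)}_\R=O(h^3)$. For $i\le s$ only $\Gamma_{i1}=0$ is available, so $hd^{[n]\,i}_{-c_ih}(\alpha h)=-\Gamma_{i2}(\alpha)\,y''(t_{n-1})h^2+O(h^3)$: the stage residuals are genuinely $O(h^2)$, and this $h^2$‑term is exactly what $\sum_{i:c_i=c_m^*}b_i(\alpha)\Gamma_{i2}(\beta)=0$ will neutralise.

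\textbf{Step 2 (structured $O(h^{2})$ bound).} Since the method has uniform order two, $\max_n\norma{q^{[n]\,s+1}}_\C=O(h^2)$. Substituting this into the stage rows of~\eqref{TS_GLMs_matrix_form_global_error}, using~\eqref{Lipschitz_condition} and that on $[-r,-c_ih]$ the stage error is a shift of the previous output error, one first gets $\max_n\norm{q^{[n]}}=O(h^2)$ for the whole vector, hence $\Delta\varPhi^{[n]}_j=f\,'(t_{n-1}+c_jh,y_{t_{n-1}+c_jh})[q^{[n]\,j}]+O(h^3)$, and then a splitting $q^{[n]\,j}=w^{[n]\,j}+r^{[n]\,j}$ with $\norm{r^{[n]}}=O(h^3)$, where $w^{[n]\,j}$ carries, on $[-c_jh,0]$, the leading term $-\Gamma_{j2}(\,\cdot\,)y''(t_{n-1})h^2$ together with the propagated part $\MyOverline{u}_j(\omega/h)q^{[n-1]\,s+1}(0)+(1-\MyOverline{u}_j(\omega/h))q^{[n-2]\,s+1}(0)$, and on $[-r,-c_jh]$ the shift $q^{[n-1]\,s+1}(\,\cdot+c_jh)$.

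\textbf{Step 3 (quenching and closure).} Write the output row of~\eqref{TS_GLMs_matrix_form_global_error} at $\omega\in[-h,0]$: $q^{[n]\,s+1}(\omega)$ equals the propagated part $\MyOverline{u}_{s+1}(\omega/h)q^{[n-1]\,s+1}(0)+(1-\MyOverline{u}_{s+1}(\omega/h))q^{[n-2]\,s+1}(0)$ plus the forcing $h\sum_j\MyOverline{\widetilde{b}}_j(\omega/h)\Delta\varPhi^{[n-1]}_j+h\sum_j\MyOverline{b}_j(\omega/h)\Delta\varPhi^{[n]}_j+hd^{[n]\,s+1}(\omega)$. Inserting Step~2 and grouping the stages by the distinct abscissae $c_m^*$ (so that the base point $t_{n-1}+c_m^*h$ and the operator $f\,'(t_{n-1}+c_m^*h,\cdot)$ are common to a group and factor out by linearity), the $\Gamma_{j2}$‑part of $h\sum_j\MyOverline{b}_j(\omega/h)\Delta\varPhi^{[n]}_j$ becomes $-h^3\sum_m f\,'(t_{n-1}+c_m^*h,\cdot)\big[\big(\sum_{i:c_i=c_m^*}\MyOverline{b}_i(\omega/h)\,\Gamma_{i2}(\,\cdot\,)\big)y''(t_{n-1})\big]$, which vanishes identically by the second hypothesis (read for $\alpha=\omega/h+1\in[0,1]$ and $\beta$ over $[0,c_m^*]$, the inner bracket is the zero function). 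What remains is $hd^{[n]\,s+1}=O(h^4)$, the $r^{[n]}$‑contributions, and the $\MyOverline{\widetilde{b}}$‑weighted previous‑step stage errors together with the propagated parts of $w^{[n]\,j}$; unrolling the scalar two‑term recursion $q^{[n]\,s+1}(0)=\MyOverline{u}_{s+1}(0)\,q^{[n-1]\,s+1}(0)+\big(1-\MyOverline{u}_{s+1}(0)\big)q^{[n-2]\,s+1}(0)+g_n$, whose Green's function is bounded by zero‑stability and tends to $\big(2-\MyOverline{u}_{s+1}(0)\big)^{-1}$, and applying a discrete Gronwall estimate in the spirit of~\eqref{global_error_estimate_5} --- with the remaining terms combined across consecutive steps by summation by parts --- gives $\max_n\norma{q^{[n]\,s+1}(0)}_\R=O(h^3)$, and feeding this back into the $\omega$‑dependent recursion yields $\max_n\norma{q^{[n]\,s+1}}_\C=O(h^3)$.

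\textbf{Main obstacle.} The delicate point is the bookkeeping forced by the two‑step structure: each stage value $K^{[n]}_j$ enters the output twice, with weight $b_j$ at step $n$ and with weight $\widetilde{b}_j$ at step $n+1$, and the hypothesis annihilates only the first occurrence, so one must show that the $\widetilde{b}$‑weighted occurrence does not accumulate beyond $O(h^3)$ --- this is where the summation by parts across consecutive steps (or an induced cancellation coming from~\eqref{order_1_cond}, $\Gamma_2=0$ and $\Gamma_3=0$) is needed. On top of that everything must be estimated in the $\C$‑norm, uniformly in $\omega\in[-r,0]$, respecting the hereditary character of $f\,'$ as a functional acting on the stage‑error functions over $[-r,0]$; keeping the $O(h^2)$‑level cancellations exact while every $O(h)$ perturbation of the coefficient functions, of the base points $t_{n-1}+c_jh$ and of the argument of $f\,'$ is merely bounded is what makes the computation technical rather than difficult.
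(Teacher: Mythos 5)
Note first that the paper itself offers no proof of Theorem~\ref{order_3_cond} (it is explicitly omitted ``for the sake of brevity''), so your proposal can only be judged on its own terms. Steps~1 and~2 are sound: the local error expansions, the reduction of the output residual to $O(h^4)$ via $\Gamma_1=\Gamma_2=\Gamma_3=0$, the isolation of the irreducible stage defect $-\Gamma_{i2}(\cdot)\,y''(t_{n-1})h^2$, and the grouping of stages by the distinct abscissae $c_m^*$ so that the common functional $f\,'(t_{n-1}+c_m^*h,\cdot)$ factors out and the hypothesis annihilates the $b$-weighted occurrence --- all of this is correct and is the right skeleton.

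The proof fails exactly at the point you yourself flag as the ``main obstacle'', and the mechanism you invoke to get past it cannot work as described. After the hypothesis kills $h\sum_j \MyOverline{b}_j(\omega/h)\,f\,'[\,\Gamma_{j2}y''h^2\chi\,]$, the step-$n$ output row still contains the forcing term $h\sum_j \MyOverline{\widetilde{b}}_j(\omega/h)\,f\,'\bigl(t_{n-1}+(c_j-1)h,\cdot\bigr)\bigl[-\Gamma_{j2}(\cdot)\,y''(t_{n-2})h^2\chi_{[-c_jh,0]}\bigr]$, which is a genuine $O(h^3)$ quantity per step: it is not controlled by $\norma{q^{[l]\,s+1}}_\C$ (so the Gronwall step cannot absorb it), and it is not annihilated by any stated hypothesis. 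Summation by parts across consecutive steps pairs the $\widetilde{b}$-weighted occurrence of $\Delta\varPhi^{[l]}$ with the $b$-weighted occurrence of the same $\Delta\varPhi^{[l]}$ one step earlier; but once the $b$-weighted part is identically zero by hypothesis, there is nothing left for the $\widetilde{b}$-weighted part to telescope against, and summing $O(h^3)$ over $N=O(h^{-1})$ steps with the bounded (and generically non-vanishing, $\to(2-v(1))^{-1}$) Green's-function weights yields only $O(h^2)$. To close the argument you would need the additional identity $\sum_{i:\,c_i=c_m^*}\widetilde{b}_i(\alpha)\Gamma_{i2}(\beta)=0$ (the RFDE analogue of the ODE two-step order-three condition, which involves $b_j+\widetilde{b}_j$ rather than $b_j$ alone), or a derivation of it from \eqref{order_1_cond}, $\Gamma_2=0$, $\Gamma_3=0$ --- neither of which you supply, and neither of which is obviously true. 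As it stands, the decisive cancellation is asserted, not proved.
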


\begin{MyTheorem}\label{order_4_cond}
  Let the TSRK method satisfy~\eqref{order_1_cond} and have uniform order three.
  \mathindent=0em
\begin{alignat}{5}
  \text{If }\ &\Gamma_4=0, \notag\\
              &\sum\limits_{\substack{i=1 \\ c_i=c_m^*}}^s {b_i(\alpha) \Gamma_{i\, 3}(\beta)}=0,                                                           &\quad \alpha&\in[0,1],&\quad &\beta\in[0,c_m^*], &\quad        &              &\quad   m&=1,\dots,s^*,\\
              &\sum\limits_{\substack{i=1 \\ c_i=c_m^*}}^s {\sum\limits_{\substack{j=1\\ c_j=c_l^*}}^s b_i(\alpha) a_{ij}(\beta) \Gamma_{j\, 2}(\gamma)}=0, &      \alpha&\in[0,1],&      &\beta\in[0,c_m^*], &\quad \gamma &\in[0,c_l^*], &\quad l,m&=1,\dots,s^*.\notag
\end{alignat}

\noindent
  then the method has uniform order four.
\end{MyTheorem}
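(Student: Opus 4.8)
The plan is to follow the route of Theorems~\ref{order_2_cond} and~\ref{order_3_cond}: argue directly on the global-error recursion~\eqref{TS_GLMs_matrix_form_global_error}, using the local-error expansion of Lemma~\ref{local_error_cond}, the Lipschitz bound~\eqref{Delta_Phi_Lipschitz_condition}, and --- to finish --- the discrete Gronwall estimate already used in the proof of the convergence theorem established above. The only new feature is that the hypotheses push the cancellations one order further. Indeed, since the method satisfies~\eqref{order_1_cond} and has uniform order three, one has $\Gamma_1\equiv\Gamma_2\equiv\Gamma_3\equiv 0$, $\Gamma_{i1}\equiv 0$ and $\sum_{c_i=c_m^*}b_i(\alpha)\Gamma_{i2}(\beta)=0$ (Theorems~\ref{order_2_cond},~\ref{order_3_cond}), and the present hypotheses add $\Gamma_4\equiv 0$, $\sum_{c_i=c_m^*}b_i(\alpha)\Gamma_{i3}(\beta)=0$ and $\sum_{c_i=c_m^*}\sum_{c_j=c_l^*}b_i(\alpha)a_{ij}(\beta)\Gamma_{j2}(\gamma)=0$. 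Applying Lemma~\ref{local_error_cond} with $p=5$ (the solution is of piecewise class $C^5$, with breakpoints on the mesh) therefore gives $d^{[n]\;s+1}=O(h^4)$ uniformly on $[-r,0]$, while for $i=1,\dots,s$ the stage residuals keep non-vanishing principal parts $-\Gamma_{i2}(\alpha)y''(t_{n-1})h^2-\Gamma_{i3}(\alpha)y'''(t_{n-1})h^3$. This is the familiar order-reduction situation: the convergence theorem above is not applicable verbatim, and the $(s+1)$-st component must be treated by a sharpened estimate.

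As the starting level I would reproduce the estimate behind Theorem~\ref{order_3_cond}: feeding the local-error expansion into~\eqref{TS_GLMs_matrix_form_global_error:q} and using $\norma{\Delta\Phi^{[n]}_j}_\R\le L\norma{q^{[n]\;j}}_\C$ gives $\norma{q^{[n]\;i}}_\C=O(h^2)$ for $i=1,\dots,s$ and $\norma{q^{[n]\;s+1}}_\C=O(h^3)$ for all $n$, and pins down the active part of each stage error, $q^{[n]\;i}_{-c_ih}(\beta h)=-\Gamma_{i2}(\beta)y''(t_{n-1})h^2+O(h^3)$, its extended part on $[-r,-c_ih]$ being the shifted $q^{[n-1]\;s+1}$ and hence $O(h^3)$. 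The mechanism, which recurs below, is that in~\eqref{TS_GLMs_shifted} stages sharing one abscissa $c_m^*$ enter the $\eta$-update through one and the same derivative functional $f'(\,\cdot\,)\in\L$, so the stage errors act only through the combined functions $\sum_{c_i=c_m^*}b_i(\alpha)\,q^{[n]\;i}\in\C$; since $f'$ is an arbitrary bounded functional, order three forces the entire $h^2$-part of every such combined function to vanish, which is exactly $\sum_{c_i=c_m^*}b_i(\alpha)\Gamma_{i2}(\beta)=0$.

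For order four I would carry the stage expansion one term further, $q^{[n]\;i}_{-c_ih}(\beta h)=-\Gamma_{i2}(\beta)y''(t_{n-1})h^2+h^3w^{[n]\;i}(\beta)+O(h^4)$, substitute it into the $\eta$-row of~\eqref{TS_GLMs_matrix_form_global_error:q} (equivalently the last row of~\eqref{TS_GLMs_shifted}) and collect the $h^3$-part of $q^{[n]\;s+1}$. Three genuinely new contributions appear: (a) the $h^3$-part of $hd^{[n]\;s+1}$, proportional to $\Gamma_4$, which vanishes; (b) the term in which the $\Gamma_{i3}$-part of $w^{[n]\;i}$ is carried by the combined functions, assembling into $\sum_{c_i=c_m^*}b_i(\alpha)\Gamma_{i3}(\beta)=0$; and (c) the ``second-generation'' term in which an $O(h^2)$ stage error, governed by $\Gamma_{j2}$, first passes through a coefficient $a_{ij}$ into another stage error and only then into $\eta$ through $b_i$, assembling --- after grouping by equal abscissae at both levels --- into $\sum_{c_i=c_m^*}\sum_{c_j=c_l^*}b_i(\alpha)a_{ij}(\beta)\Gamma_{j2}(\gamma)=0$. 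The remaining $h^3$-terms are subsumed by the order-three conditions --- the propagation $\MyOverline{u}_{s+1}(\omega/h)\,q^{[n-1]\;s+1}(0)+(1-\MyOverline{u}_{s+1}(\omega/h))\,q^{[n-1]\;s+2}$ only reproduces the $h^3$-coefficients of $q^{[n-1]\;s+1}(0)$ and $q^{[n-2]\;s+1}(0)$, and the step-$(n-1)$ coupling terms are shifted copies of the same three combinations --- or are of higher order (the quadratic Taylor remainder of $f$ is $O(\norm{q^{[n]}}^2)=O(h^4)$). Hence $q^{[n]\;s+1}$ no longer carries any $O(h^3)$ forcing beyond what the Gronwall factors already absorb.

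The argument is then finished as in the proof of the convergence theorem above: writing $\widehat q^{[n]}$ for $q^{[n]}$ with its explicit $h^2$- and $h^3$-parts removed, $\widehat q^{[n]}$ satisfies~\eqref{TS_GLMs_matrix_form_global_error} with forcing that is $O(h^4)$ in all components of interest and with the usual extension relation on $[-r,-c_ih]$; zero-stability ($0\le\MyOverline{u}_{s+1}(0)<2$), a starting procedure with $\norm{q^{[0]}}=O(h^3)$ and $\norma{q^{[0]}(0)}=O(h^4)$, and $\norma{\Delta\Phi^{[n]}}\le L\norm{q^{[n]}}$ then give, by discrete Gronwall, $\max_n\norma{q^{[n]\;s+1}}_\C=O(h^4)$. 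The main obstacle is the $h^3$-accounting in the third step: checking that precisely the three listed combinations, and no others, occur. This needs (i) handling the functional structure $\Delta\Phi^{[n]}_j=f'(\,\cdot\,)[q^{[n]\;j}]+O(\norm{q^{[n]}}^2)$ with $f'\in\L$ acting on the error \emph{functions}, not only on their values; (ii) controlling the extended pieces $\omega\in[-r,-c_ih]$, where $q^{[n]\;i}$ equals the shifted $q^{[n-1]\;s+1}$ and which, by the piecewise-$C^5$ hypothesis with breakpoints on the mesh, contribute no spurious $h^3$-terms; and (iii) the regrouping over the distinct abscissae $c_1^*,\dots,c_{s^*}^*$ that turns the sums $\sum_i$, $\sum_{i,j}$ arising formally into those restricted to $c_i=c_m^*$, $c_j=c_l^*$. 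The rest is routine and parallels computations already carried out above.
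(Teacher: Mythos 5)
The paper itself offers no proof to compare against: it states explicitly that the proofs of Theorems~\ref{order_2_cond}--\ref{order_4_cond} are omitted for brevity. Your overall strategy --- feed the expansion of Lemma~\ref{local_error_cond} into the error recursion~\eqref{TS_GLMs_matrix_form_global_error}, track the deterministic $h^2$- and $h^3$-parts of the stage errors, group the surviving terms by the distinct abscissae $c_m^*$ because stages sharing an abscissa enter $f$ through the same Fr\'echet derivative, and finish with the zero-stability/Gronwall machinery of the convergence theorem --- is the natural one and is consistent with the paper's framework and with the one-step theory of~\cite{Maset}. So the issue is not the route but whether the bookkeeping closes.

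It does not, and the gap sits exactly where you flag it. You dismiss the step-$(n-1)$ coupling terms as ``shifted copies of the same three combinations,'' but they are not: the $\eta$-row of~\eqref{TS_GLMs_matrix_form_global_error:q} contains $h\sum_j \widetilde{b}_j(\alpha)\,\Delta\varPhi^{[n-1]}_j$, and each stage row contains $h\sum_k \widetilde{a}_{jk}(\beta)\,\Delta\varPhi^{[n-1]}_k$, where $\Delta\varPhi^{[n-1]}_k \approx f'(\cdot)\bigl[q^{[n-1]\;k}\bigr]$ carries the non-vanishing principal parts $-\Gamma_{k2}y''(t_{n-2})h^2-\Gamma_{k3}y'''(t_{n-2})h^3$ of the \emph{previous} step's stage errors. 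Collecting the $h^4$-part of the $\eta$-forcing therefore produces, alongside the three hypothesised combinations, the further combinations $\sum_{c_j=c_m^*}\widetilde{b}_j(\alpha)\Gamma_{j3}(\beta)$, $\sum\sum \widetilde{b}_j(\alpha)a_{jk}(\beta)\Gamma_{k2}(\gamma)$, $\sum\sum b_j(\alpha)\widetilde{a}_{jk}(\beta)\Gamma_{k2}(\gamma)$ and $\sum\sum \widetilde{b}_j(\alpha)\widetilde{a}_{jk}(\beta)\Gamma_{k2}(\gamma)$, which involve different coefficient functions and are not implied by the stated hypotheses. Propagation through $M(\omega)$ does not rescue this: once $\sum_{c_j=c_m^*}b_j(1)\Gamma_{j2}=0$ kills the propagated copy, the $\widetilde{b}_j(\alpha)$-combination must vanish on its own. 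A necessity argument from the uniform-order-three hypothesis can at best give you the lower-order identities such as $\sum_{c_j=c_m^*}\widetilde{b}_j(\alpha)\Gamma_{j2}(\beta)=0$ (which, note, is already needed for Theorem~\ref{order_3_cond} and is absent there too); it cannot produce the genuinely new order-four combinations above. So either these must be added as hypotheses, or you must exhibit a cancellation mechanism that eliminates them --- your proposal does neither, and until it does the proof is incomplete. (The omission is harmless for the constructions of Section~4, where high stage order forces $\Gamma_{k2}=\Gamma_{k3}=0$ and all such combinations vanish trivially, but Theorem~\ref{order_4_cond} is stated for general $\Gamma_{ik}$.)
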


\begin{MyTheorem}\label{stage_order}
  The TSRK method has uniform stage order~$\widetilde{q}\;$ iff\\
  $\Gamma_{ik}=0,\ \Gamma_{k}=0,\quad i=1,\dots,s,\quad k=1,\dots,\widetilde{q}.$
\end{MyTheorem}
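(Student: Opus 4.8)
The plan is to extract everything from the local-error formula of Lemma~\ref{local_error_cond}, which already isolates the coefficient functions $\Gamma_{ik}$. I read \emph{uniform stage order $\widetilde q$} as uniform order of consistency $\widetilde q$ of the full vector method~\eqref{TS_GLMs_matrix_form} (Definition~\ref{def_consistency_matrix_form}), $\max_{n}\norm{d^{[n]}}=O(h^{\widetilde q})$; under zero-stability and a sufficiently accurate starting procedure the corresponding convergence statement then follows from the convergence theorem. First I would apply Lemma~\ref{local_error_cond} with $p=\widetilde q+1$ (legitimate, as $y$ is of piecewise class $C^m$ with $m$ large) and divide by $h$: for $\omega=(\alpha-c_i)h\in[-c_ih,0]$,
\[
  d^{[n]\; i}\bigl((\alpha-c_i)h\bigr)=-\sum_{k=1}^{\widetilde q}\Gamma_{ik}(\alpha)\,y^{(k)}(t_{n-1})\,h^{k-1}+O(h^{\widetilde q}),\qquad \alpha\in[0,c_i],\ i=1,\dots,s+1,
\]
the $O(h^{\widetilde q})$ being uniform in $n$ since the derivatives of $y$ are bounded on each smooth piece of $[t_0-r,T]$. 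Together with $d^{[n]\; i}(\omega)=0$ for $\omega\in[-r,-c_ih]$ and $d^{[n],\,s+2}\equiv0$ (both $Z^{[n],\,s+2}$ and $Z^{[n-1],\,s+1}(0)$ equal $y(t_{n-1})$ by the last row of~\eqref{TS_GLMs_matrix_form_residual:Z}), this reduces $\norm{d^{[n]}}$ to the supremum of the displayed right-hand side over $i\le s+1$ and $\alpha\in[0,c_i]$.

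The ``if'' direction is then immediate: if $\Gamma_{ik}\equiv0$ on $[0,c_i]$ for $i=1,\dots,s$ and $\Gamma_k=\Gamma_{s+1,k}\equiv0$ on $[0,1]$ for $k=1,\dots,\widetilde q$, the whole sum vanishes, so $d^{[n]\; i}((\alpha-c_i)h)=O(h^{\widetilde q})$ uniformly and hence $\max_n\norm{d^{[n]}}=O(h^{\widetilde q})$.

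For the ``only if'' direction, assume $\max_n\norm{d^{[n]}}=O(h^{\widetilde q})$, fix $i\le s+1$ and $\alpha\in[0,c_i]$; then $\norma{d^{[n]\; i}((\alpha-c_i)h)}_\R\le\norm{d^{[n]}}=O(h^{\widetilde q})$, so the displayed formula gives $\sum_{k=1}^{\widetilde q}\Gamma_{ik}(\alpha)\,y^{(k)}(t_{n-1})\,h^{k-1}=O(h^{\widetilde q})$. Because the $\Gamma_{ik}$ depend only on the method's coefficients (formulas~\eqref{Gamma_i},~\eqref{Gamma}), not on the problem, I would test against $y'=y$ (the RFDE with $r=0$, $f(t,\varphi)=\varphi(0)$, which satisfies all standing hypotheses), whose solution $y(t)=y(t_0)e^{t-t_0}$ has every derivative equal to $y$ and nonvanishing. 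Running $h_m\to0$ through admissible stepsizes with $n=m+1$ chosen so that $t_{n-1}$ stays at a fixed interior point $t^{\ast}$ turns the relation into $\bigl(\sum_{k=1}^{\widetilde q}\Gamma_{ik}(\alpha)h_m^{k-1}\bigr)e^{t^{\ast}-t_0}=O(h_m^{\widetilde q})$, i.e.\ a polynomial in $h_m$ of degree $\le\widetilde q-1$ that is $O(h_m^{\widetilde q})$, hence identically zero: $\Gamma_{ik}(\alpha)=0$, $k=1,\dots,\widetilde q$. As $\alpha$ was arbitrary, $\Gamma_{ik}\equiv0$ on $[0,c_i]$ for all $i=1,\dots,s+1$ and $k=1,\dots,\widetilde q$, which --- remembering $\Gamma_k=\Gamma_{s+1,k}$ --- is exactly the asserted system.

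The ``if'' part is routine; the delicate step is the ``only if'' extraction, turning a single uniform bound on $\norm{d^{[n]}}$ into the pointwise vanishing of each polynomial coefficient $\Gamma_{ik}(\alpha)$. This needs freezing $t_{n-1}$ along a subsequence of stepsizes (so the $y^{(k)}(t_{n-1})$ are genuine constants) and a test problem whose Taylor coefficients never vanish (so that polynomial rigidity applies). If uniform stage order is instead demanded only for the given problem~\eqref{RFDEs}, the same computation works provided that solution's Taylor coefficients are non-degenerate at some interior point; otherwise the statement is to be read, as is customary for order conditions, as a property of the method holding for all admissible $f$.
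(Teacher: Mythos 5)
Your proposal is correct and takes essentially the same route as the paper: the paper's entire proof is the single sentence ``It follows from~\eqref{local_error}'', i.e.\ it derives the equivalence directly from the local-error expansion of Lemma~\ref{local_error_cond}, exactly as you do. Your elaboration of the ``only if'' direction (freezing $t_{n-1}$ along a subsequence and invoking polynomial rigidity against a test problem with nonvanishing Taylor coefficients) fills in details the paper leaves implicit, but it is not a different argument.
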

\begin{proof}
  It follows from~\eqref{local_error}.
\end{proof}

The following results can be obtained as corollaries of Theorems~\ref{stage_order} and~\ref{order_3_cond},~\ref{order_4_cond}.

\begin{MyCorollary}\label{stage_order_3_cond}
  Let the TSRK method have uniform stage order two.

\noindent
  If $\ \Gamma_3=0\ $  then the method has uniform order three.
\end{MyCorollary}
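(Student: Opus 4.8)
The plan is to verify that every hypothesis of Theorem~\ref{order_3_cond} follows from the assumption of uniform stage order two together with the extra assumption $\Gamma_3=0$, and then simply to quote that theorem.

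First I would invoke Theorem~\ref{stage_order}: uniform stage order two is equivalent to $\Gamma_{ik}=0$ and $\Gamma_k=0$ for $i=1,\dots,s$ and $k=1,2$. In particular $\Gamma_1=0$ and $\Gamma_{i1}=0$ for all $i$, which is exactly condition~\eqref{order_1_cond} (the uniform stage order one condition). Since moreover $\Gamma_2=0$, Theorem~\ref{order_2_cond} applies and the method has uniform order two. Thus the first two hypotheses of Theorem~\ref{order_3_cond} — that the method satisfies~\eqref{order_1_cond} and has uniform order two — are already in place.

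It then remains to check the two displayed conditions in the statement of Theorem~\ref{order_3_cond}. The first, $\Gamma_3=0$, is the hypothesis of the corollary. The second is the auxiliary vanishing condition $\sum_{\substack{i=1\\ c_i=c_m^*}}^s b_i(\alpha)\Gamma_{i2}(\beta)=0$ for $\alpha\in[0,1]$, $\beta\in[0,c_m^*]$, $m=1,\dots,s^*$; here the key point is that uniform stage order two forces $\Gamma_{i2}\equiv 0$ for every $i=1,\dots,s$, so each summand vanishes identically and the condition holds trivially. Applying Theorem~\ref{order_3_cond} now gives uniform order three, which is the claim.

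There is essentially no obstacle here: the whole content of the corollary is the observation that the extra sum condition which, in Theorem~\ref{order_3_cond}, separates ``uniform order two plus $\Gamma_3=0$'' from ``uniform order three'' becomes vacuous the moment the stage order is raised to two. The only care needed is to spell out the intermediate reductions (to~\eqref{order_1_cond} and to ``uniform order two'') clearly enough that Theorem~\ref{order_3_cond} can be invoked verbatim.
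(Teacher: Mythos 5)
Your proposal is correct and follows exactly the route the paper intends: the paper derives this corollary from Theorem~\ref{stage_order} (uniform stage order two gives $\Gamma_{i1}=\Gamma_{i2}=0$ and $\Gamma_1=\Gamma_2=0$, hence~\eqref{order_1_cond} and, via Theorem~\ref{order_2_cond}, uniform order two) combined with Theorem~\ref{order_3_cond}, whose auxiliary sum condition becomes vacuous since every $\Gamma_{i2}$ vanishes identically. Your write-up simply makes explicit the chain of reductions the paper leaves to the reader.
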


\begin{MyCorollary}\label{stage_order_4_cond}
  Let the TSRK method have uniform stage order three.

\noindent
  If $\ \Gamma_4=0$ then the method has uniform order four.
\end{MyCorollary}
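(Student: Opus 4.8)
The plan is to deduce this Corollary by chaining together Theorem~\ref{stage_order}, Corollary~\ref{stage_order_3_cond}, and Theorem~\ref{order_4_cond}; no new estimates are needed. First I would unpack the hypothesis: by Theorem~\ref{stage_order}, uniform stage order three is equivalent to
\begin{align*}
  \Gamma_{ik}=0,\ i=1,\dots,s,\qquad \Gamma_k=0,\qquad k=1,2,3.
\end{align*}
In particular $\Gamma_1=0$ and $\Gamma_{i1}=0$ for $i=1,\dots,s$, so the uniform stage order one condition~\eqref{order_1_cond} holds; and since the method then a fortiori has uniform stage order two and satisfies $\Gamma_3=0$, Corollary~\ref{stage_order_3_cond} gives that the method has uniform order three.

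Next I would check the hypotheses of Theorem~\ref{order_4_cond}. The method satisfies~\eqref{order_1_cond} and has uniform order three, as just shown, and $\Gamma_4=0$ is assumed. It remains to verify the two extra sum conditions. But $\Gamma_{i3}=0$ for every $i=1,\dots,s$, so for each $m$
\begin{align*}
  \sum_{\substack{i=1 \\ c_i=c_m^*}}^s b_i(\alpha)\,\Gamma_{i\,3}(\beta)=0,\qquad \alpha\in[0,1],\ \beta\in[0,c_m^*],
\end{align*}
and likewise $\Gamma_{j2}=0$ for every $j=1,\dots,s$, so for each $l,m$
\begin{align*}
  \sum_{\substack{i=1 \\ c_i=c_m^*}}^s \sum_{\substack{j=1 \\ c_j=c_l^*}}^s b_i(\alpha)\,a_{ij}(\beta)\,\Gamma_{j\,2}(\gamma)=0,\qquad \alpha\in[0,1],\ \beta\in[0,c_m^*],\ \gamma\in[0,c_l^*].
\end{align*}
Thus all hypotheses of Theorem~\ref{order_4_cond} are met, and the method has uniform order four, as claimed.

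There is no real obstacle here: the argument is purely a bookkeeping reduction showing that stage order three makes the ``coupling'' conditions in Theorem~\ref{order_4_cond} degenerate. The one point requiring a moment of care is confirming that uniform stage order three indeed supplies~\eqref{order_1_cond} and uniform order three as inputs to Theorem~\ref{order_4_cond} — i.e. that the chain Theorem~\ref{stage_order} $\Rightarrow$ Corollary~\ref{stage_order_3_cond} $\Rightarrow$ (order three) is valid — but this is immediate from the stated results. Hence the Corollary follows.
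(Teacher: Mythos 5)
Your proposal is correct and follows exactly the route the paper intends: the paper gives no explicit proof, merely noting that this result follows from Theorems~\ref{stage_order} and~\ref{order_4_cond}, and your argument spells out precisely that reduction (stage order three kills the coupling sums via $\Gamma_{i2}=\Gamma_{i3}=0$ and supplies~\eqref{order_1_cond} plus uniform order three via Corollary~\ref{stage_order_3_cond}).
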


The results of Corollary~\ref{stage_order_3_cond} and~\ref{stage_order_4_cond} can be easily generalized as follows.

\begin{MyTheorem}\label{order_p_cond}
  Let the TSRK method have uniform stage order~$\widetilde{q}$.\linebreak[3]
 It has uniform order~$q=\widetilde{q}+1$ iff $\ \Gamma_{\widetilde{q}+1}=0$.
\end{MyTheorem}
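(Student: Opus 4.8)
The plan is to combine Lemma~\ref{local_error_cond}, Theorem~\ref{stage_order} and the zero–stability Lemma, and to argue by a \emph{simultaneous} induction on $n$ in the spirit of the proof of the convergence theorem above, controlling all components of $q^{[n]}$ at once rather than treating stages and output separately. First I would record the shape of the local error. By Theorem~\ref{stage_order} uniform stage order $\widetilde q$ is equivalent to $\Gamma_{ik}\equiv0$ and $\Gamma_k\equiv0$ for $i=1,\dots,s$ and $k=1,\dots,\widetilde q$; with the convention $\Gamma_k=\Gamma_{s+1,\,k}$ this means $\Gamma_{ik}\equiv0$ for $i=1,\dots,s+1$ and $k\le\widetilde q$. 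Substituting into Lemma~\ref{local_error_cond} (applied with $p=\widetilde q+2$, using that $y$ is of piecewise class $C^{\widetilde q+2}$) gives
\[
  hd^{[n]\; i}_{-c_ih}(\alpha h)=-\Gamma_{i,\,\widetilde q+1}(\alpha)\,y^{(\widetilde q+1)}(t_{n-1})\,h^{\widetilde q+1}+O(h^{\widetilde q+2}),\qquad \alpha\in[0,c_i],\quad i=1,\dots,s+1,
\]
so $hd^{[n]\; i}(\omega)=O(h^{\widetilde q+1})$ for every $i$, while the extra hypothesis $\Gamma_{\widetilde q+1}=\Gamma_{s+1,\,\widetilde q+1}\equiv0$ upgrades the output component to $hd^{[n]\; s+1}(\omega)=O(h^{\widetilde q+2})$ uniformly for $\omega\in[-r,0]$ (recall $d^{[n]\; i}$ vanishes on $[-r,-c_ih]$).

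\emph{Sufficiency.} Assume $\Gamma_{\widetilde q+1}\equiv0$; I would prove $\max_{n}\norm{q^{[n]}}=O(h^{\widetilde q+1})$, which in particular gives uniform order $\widetilde q+1$. The induction hypothesis is that $\norma{q^{[m]\; j}}_\C\le\widehat C\,h^{\widetilde q+1}$ for all $m<n$ and all $j=1,\dots,s+2$, with $\widehat C$ a constant fixed exactly as in the proof of the convergence theorem and $h$ small. By~\eqref{Delta_Phi_Lipschitz_condition} this gives $\norma{\Delta\varPhi^{[m]}}\le L\norm{q^{[m]}}=O(h^{\widetilde q+1})$, so the two terms $h\widetilde{\mathcal A}(\omega)\Delta\varPhi^{[n-1]}$ and $h\mathcal A(\omega)\Delta\varPhi^{[n]}$ in~\eqref{TS_GLMs_matrix_form_global_error:q} are $O(h^{\widetilde q+2})$ (using strict lower triangularity of $\mathcal A$ for explicit methods, or a contraction in $\norm{q^{[n]}}$ for small $h$ otherwise). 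Using $q^{[n],\,s+2}=q^{[n-1],\,s+1}(0)$, the rows $i=1,\dots,s$ then read $q^{[n]\; i}(\omega)=\MyOverline{u}_i(\tfrac{\omega}{h})q^{[n-1],\,s+1}(0)+\bigl(1-\MyOverline{u}_i(\tfrac{\omega}{h})\bigr)q^{[n-2],\,s+1}(0)+O(h^{\widetilde q+2})+hd^{[n]\; i}(\omega)=O(h^{\widetilde q+1})$, and the extended parts on $[-r,-c_ih]$ are shifts of $q^{[n-1],\,s+1}$, controlled by induction (falling back, for the first $\lceil r/h\rceil$ steps, on the accuracy of the starting procedure). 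It remains to estimate $a_n:=q^{[n],\,s+1}(0)$. Taking $\omega=0$ in the $(s+1)$-st row, with $\MyOverline{u}_{s+1}(0)=u_{s+1}(c_{s+1})=v(1)$, one gets the scalar ($\R$-valued) recursion
\[
  a_n=v(1)\,a_{n-1}+\bigl(1-v(1)\bigr)a_{n-2}+r_n,\qquad r_n:=h\sum_{j=1}^s\widetilde b_j(1)\Delta\varPhi^{[n-1]}_j+h\sum_{j=1}^s b_j(1)\Delta\varPhi^{[n]}_j+hd^{[n]\; s+1}(0),
\]
and $|r_n|\le C'h^{\widetilde q+2}$ because each $\Delta\varPhi^{[m]}_j=O(h^{\widetilde q+1})$ and $hd^{[n]\; s+1}(0)=O(h^{\widetilde q+2})$. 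Its characteristic polynomial factors as $(\mu-1)\bigl(\mu-(v(1)-1)\bigr)$; by the zero–stability Lemma $0\le v(1)<2$, so $\mu=1$ is simple and the second root lies in the closed unit disc, hence the discrete Green's function of the recursion is uniformly bounded and, summing the geometric/alternating contributions, $|a_n|\le C''\bigl(|a_0|+|a_1|\bigr)+C''\sum_{l\le n}|r_l|\le C''\bigl(|a_0|+|a_1|\bigr)+C''N\,C'h^{\widetilde q+2}=O(h^{\widetilde q+1})$ since $Nh=T-t_0$. Feeding this back, $q^{[n]\; s+1}(\omega)=O(h^{\widetilde q+1})$ for $\omega\in[-h,0]$ (here the uniform bound $hd^{[n]\; s+1}(\omega)=O(h^{\widetilde q+2})$ for \emph{all} $\omega$ is used), while for $\omega\in[-r,-h]$ it is a shift of $q^{[n-1],\,s+1}$; choosing $\widehat C$ large enough closes the induction.

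\emph{Necessity.} Conversely, suppose the method has uniform stage order $\widetilde q$ and uniform order $\widetilde q+1$, but $\Gamma_{\widetilde q+1}\not\equiv0$, and pick $\alpha_0\in[0,1]$ with $\Gamma_{\widetilde q+1}(\alpha_0)\ne0$. If $\alpha_0=1$, apply the method to $y'=g(t)$ with $g$ a polynomial of degree $\widetilde q$, so that $y$ is a polynomial of degree $\widetilde q+1$ with $y^{(\widetilde q+1)}\equiv\kappa\ne0$ and all higher remainders vanishing; then $\Delta\varPhi^{[n]}\equiv0$, the recursion above becomes $a_n=v(1)a_{n-1}+(1-v(1))a_{n-2}+\Gamma_{\widetilde q+1}(1)\kappa h^{\widetilde q+1}$ exactly, and because the forcing is constant, nonzero, and resonant with the root $\mu=1$, its solution grows like $\dfrac{\Gamma_{\widetilde q+1}(1)\kappa}{2-v(1)}\,n h^{\widetilde q+1}$, so $a_N=\Theta(h^{\widetilde q})$, contradicting $\max_{n}\norma{q^{[n]\; s+1}(0)}_\R=O(h^{\widetilde q+1})$. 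For $\alpha_0\in(0,1)$ the same resonance is triggered through a delayed argument: for the test equation $y'(t)=y(t-r)$ with $y$ smooth on the interval considered and $r>0$ fixed, a stage function evaluated at $\theta=-r$ is, by the extension rules~\eqref{TS_GLMs:Yi_extended}, an interior value $\eta^{[m]}(\alpha^*h)$ of an earlier step with $\alpha^*=\lceil r/h-c_j\rceil-(r/h-c_j)$; choosing $h\to0$ along a sequence with $r/h\to\alpha_0\pmod 1$ forces $\alpha^*\to\alpha_0$, so the leading term $-\Gamma_{\widetilde q+1}(\alpha_0)y^{(\widetilde q+1)}h^{\widetilde q+1}$ of that interior error enters the $\eta$–recursion, is essentially constant in $n$, and again resonates with $\mu=1$ to give an $\Theta(h^{\widetilde q})$ global error. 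In either case uniform order $\widetilde q+1$ fails, so $\Gamma_{\widetilde q+1}\equiv0$.

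The comparatively routine half is the sufficiency once the right induction hypothesis is in place: bounding \emph{all} components of $q^{[n]}$ by $O(h^{\widetilde q+1})$ simultaneously is precisely what pushes the $h\mathcal A\,\Delta\varPhi$ and $h\widetilde{\mathcal A}\,\Delta\varPhi$ contributions into $O(h^{\widetilde q+2})$, so that the $\mu=1$ resonance in the output recursion amplifies an $O(h^{\widetilde q+2})$ forcing only to $O(h^{\widetilde q+1})$; the reductions to the scalar recursion and the Gronwall-type choice of $\widehat C$ then mirror the proof of the convergence theorem. The main obstacle is the necessity for interior $\alpha_0$: one must verify that the $\Theta(h^{\widetilde q+1})$ local contribution produced by $\Gamma_{\widetilde q+1}(\alpha_0)\ne0$ is not annihilated by the $O(h^{\widetilde q+1})$ pieces of the propagated error present at the same order, which requires tracking the leading asymptotic term of $q^{[n]}$ — not merely its order — through the coupled recursion (a classical but technical global–error–expansion argument), and checking that the delay genuinely samples the node $\alpha_0$ along a suitable subsequence of step sizes.
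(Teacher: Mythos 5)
First, note that the paper itself supplies no proof here: Theorem~\ref{order_p_cond} is presented as an ``easy generalization'' of Corollaries~\ref{stage_order_3_cond} and~\ref{stage_order_4_cond}, which in turn rest on Theorems~\ref{order_3_cond} and~\ref{order_4_cond} whose proofs are explicitly omitted, so there is no written argument to compare against. Your \emph{sufficiency} direction is, in my view, the correct argument and essentially sound: uniform stage order $\widetilde q$ makes every component of the local error $O(h^{\widetilde q+1})$, the extra hypothesis $\Gamma_{\widetilde q+1}\equiv0$ pushes the $(s+1)$-st component to $O(h^{\widetilde q+2})$, the stage errors feed the output recursion only through $h\widetilde{\mathcal A}\Delta\varPhi$ and $h\mathcal A\Delta\varPhi$ (hence with an extra factor of $h$), and the simple resonant root $\mu=1$ of $(\mu-1)\bigl(\mu-(v(1)-1)\bigr)$ amplifies the accumulated $O(h^{\widetilde q+2})$ forcing only back to $O(h^{\widetilde q+1})$. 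One caveat: the induction with a single fixed $\widehat C$ does not close on its own, because the bound you produce at step $n$ contains a term proportional to $L(T-t_0)\widehat C$; you must genuinely reduce to an inequality of the form~\eqref{global_error_estimate_5} and invoke the discrete Gronwall step~\eqref{Kn}, as in the convergence theorem. You flag this, so I count it as a presentational rather than a mathematical defect.

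The genuine gap is in the \emph{necessity} direction for interior $\alpha_0$. The endpoint case is fine: if $\Gamma_{\widetilde q+1}(1)\neq0$, the quadrature test problem gives an exactly constant forcing $-\Gamma_{\widetilde q+1}(1)\kappa h^{\widetilde q+1}$ resonant with $\mu=1$, and $a_N=\Theta(h^{\widetilde q})$. But if $\Gamma_{\widetilde q+1}(1)=0$ while $\Gamma_{\widetilde q+1}(\alpha_0)\neq0$ for some $\alpha_0\in(0,1)$, the mechanism you describe cannot produce a $\Theta(h^{\widetilde q})$ error: an interior value of $q^{[m],\,s+1}$ influences later steps only through the shift~\eqref{TS_GLMs_matrix_form_global_error:q_extended}, which transports it without amplification, or through $\Delta\varPhi$, which always enters~\eqref{TS_GLMs_matrix_form_global_error:q} with a prefactor $h$. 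A $\Theta(h^{\widetilde q+1})$ interior defect therefore contributes only $\Theta(h^{\widetilde q+2})$ per step to the $a_n$-recursion, and the resonance restores it merely to $\Theta(h^{\widetilde q+1})$ --- which is \emph{consistent} with uniform order $\widetilde q+1$, not a contradiction. So the subsequence-of-stepsizes construction fails, and the analysis in fact suggests that, given uniform stage order $\widetilde q$, uniform order $\widetilde q+1$ is equivalent to the single scalar condition $\Gamma_{\widetilde q+1}(1)=0$ rather than to the identity $\Gamma_{\widetilde q+1}\equiv0$. The ``only if'' half of the theorem as stated therefore needs either a different argument or a weakening; it is telling that Corollaries~\ref{stage_order_3_cond} and~\ref{stage_order_4_cond}, of which this theorem claims to be the generalization, are stated one-directionally.
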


\section{Construction of explicit TSRK methods of\\ uniform stage order four and five}

Consider a two-stage explicit TSRK method satisfying~\eqref{order_1_cond}. 
Its Butcher tableau is

\begin{MyTable}{Butcher tableau for 2-stage explicit TSRK methods}\label{tab_2_stage_ETSGLMs}
  \arrayrulewidth=0.5pt
  \begin{tabular}{c|c|c c|c c}
    $c_1$   & $u_1(\alpha)$ & $\widetilde{a}_{11}(\alpha)$  & $\widetilde{a}_{12}(\alpha)$  & 0                 & 0             \\
    $c_2$   & $u_2(\alpha)$ & $\widetilde{a}_{21}(\alpha)$  & $\widetilde{a}_{22}(\alpha)$  & $a_{21}(\alpha)$  & 0             \\

    \hline
          & $v(\alpha)$   & $\widetilde{b}_{1}(\alpha)$   & $\widetilde{b}_{2}(\alpha)$   & $b_1(\alpha)$     & $b_{2}(\alpha)$
  \end{tabular}
  \medskip
\end{MyTable}
A natural choice will be to space out  the abscissae $c_i,\; i=1,\dots,s$ uniformly
\smallskip
in the interval~$[0,1]$ so that~\cite{Butcher_Jackiewicz}
\smallskip
$c_1=0,\ c_2=\dfrac{1}{s-1},\dots,\ c_{s-1}=\dfrac{s-2}{s-1},\ c_s=1$. In the case of $s=2$ we have $c_1=0,\ c_2=1$.

Since $c_1=0,$ conditions $\Gamma_{1k}(\alpha)=0,\quad\alpha\in[0,c_1],\; k=1,2,\dots$
reduce to $\Gamma_{1k}(0)=0,$ \linebreak[4] $\ k=1,2,\dots$ that follows from~\eqref{Yi_continuous}. It also follows that $u_1(\cdot)=1,\ \widetilde{a}_{11}(\cdot)=0,\ \widetilde{a}_{12}(\cdot)=0$.

For the sake of brevity we omit the argument $\alpha$ of the method coefficient functions. By~\hbox{Theorem}~\ref{order_p_cond}, the method has uniform order four and uniform stage order three if
\linebreak[4]
 $\Gamma_k=0,\; k=1,2,3,4$ and $\Gamma_{2\,k}=0,\; k=1,2,3,\;$ that is
\begin{alignat}{5}\label{2_stage_4_order_ETSGLM_cond}
  -(1-v)           &+\widetilde{b}_1     & &+\widetilde{b}_2     & &+b_1    & &+b_2      & &=\alpha,            \notag\\
  \frac{1-v}{2}    &-\widetilde{b}_1     & &                     & &        & &+b_2      & &=\frac{\alpha^2}{2},\notag\\
  -\frac{1-v}{3}   &+\widetilde{b}_1     & &                     & &        & &+b_2      & &=\frac{\alpha^3}{3},\notag\\
   \frac{1-v}{4}   &-\widetilde{b}_1     & &                     & &        & &+b_2      & &=\frac{\alpha^4}{4},\\
  -(1-u_2)         &+\widetilde{a}_{21}  & &+\widetilde{a}_{22}  & &+a_{21} & &          & &=\alpha,            \notag\\
  \frac{1-u_2}{2}  &-\widetilde{a}_{21}  & &                     & &        & &          & &=\frac{\alpha^2}{2},\notag\\
  -\frac{1-u_2}{3} &+\widetilde{a}_{21}  & &                     & &        & &          & &=\frac{\alpha^3}{3},\notag
\end{alignat}
where $\alpha\in[0,1]$.

\noindent The coefficients are defined by
\begin{alignat}{5}\label{2_stage_4_order_ETSGLM_coeff}
u_{{2}}               &=- \left( 2\,\alpha-1 \right)  \left( \alpha+1 \right) ^{2},\notag\\
v                     &= \left( \alpha-1 \right) ^{2} \left( \alpha+1 \right) ^{2},\notag\\
\widetilde{a}_{2,1}   &={\alpha}^{2} \left( \alpha+1 \right),\notag\\
\widetilde{b}_{1}     &=-\frac{1}{12}\,{\alpha}^{2} \left( \alpha+1 \right)  \left( 5\,\alpha-7 \right),\\
a_{2,1}               &= \alpha\, \left( \alpha+1 \right)^2-\widetilde{a}_{2,2},\notag\\
b_1                   &=-\frac{1}{3}\,\alpha\, \left( 2\,\alpha-3 \right)  \left( \alpha+1 \right) ^{2}-\widetilde{b}_{2},\notag\\
b_2                   &=\frac{1}{12}\,{\alpha}^{2} \left( \alpha+1 \right) ^{2},\notag
\end{alignat}
where $\widetilde{a}_{2,2},\ \widetilde{b}_{2}$ remain free. The relation $\Gamma_5(1)=\dfrac{4}{15}\ne0$ implies that it is impossible to attain discrete order five.

The uniform order and the uniform stage order can be increased by finding a suitable value for $c_2$. Assume that $c_1=0,\ c_2  \ne 0$ (in general case~$c_2 \ne 1$). By theorem~\eqref{order_p_cond},
the method has uniform order five and uniform stage order four if\linebreak[3] $\Gamma_k=0,\; k=1,2,3,4,5$ and $\Gamma_{2\,k}=0,\; k=1,2,3,4, \;$ that is

\begin{alignat}{5}\label{2_stage_5_order_ETSGLM_cond}
  -(1-v)           &+\widetilde{b}_1     & &+\widetilde{b}_2              & &+b_1    & &+b_2       & &=\alpha,            \notag\\
  \frac{1-v}{2}    &-\widetilde{b}_1     & &-(1-c_2)\widetilde{b}_2       & &        & &+c_2   b_2 & &=\frac{\alpha^2}{2},\notag\\
  -\frac{1-v}{3}   &+\widetilde{b}_1     & &+(1-c_2)^2\widetilde{b}_2     & &        & &+c_2^2 b_2 & &=\frac{\alpha^3}{3},\notag\\
  \frac{1-v}{4}    &-\widetilde{b}_1     & &-(1-c_2)^3\widetilde{b}_2     & &        & &+c_2^3 b_2 & &=\frac{\alpha^4}{4},\notag\\
  -\frac{1-v}{5}   &+\widetilde{b}_1     & &+(1-c_2)^4\widetilde{b}_2     & &        & &+c_2^4 b_2 & &=\frac{\alpha^5}{5},\\
  -(1-u_2)         &+\widetilde{a}_{21}  & &+\widetilde{a}_{22}           & &+a_{21} & &           & &=\alpha,            \notag\\
  \frac{1-u_2}{2}  &-\widetilde{a}_{21}  & &-(1-c_2)\widetilde{a}_{22}    & &        & &           & &=\frac{\alpha^2}{2},\notag\\
  -\frac{1-u_2}{3} &+\widetilde{a}_{21}  & &+(1-c_2)^2\widetilde{a}_{22}  & &        & &           & &=\frac{\alpha^3}{3},\notag\\
  \frac{1-u_2}{4}  &-\widetilde{a}_{21}  & &-(1-c_2)^3\widetilde{a}_{22}  & &        & &           & &=\frac{\alpha^4}{4},\notag
\end{alignat}
where $\alpha\in[0,1]$ in the first five equations~\eqref{2_stage_5_order_ETSGLM_cond} and $\alpha\in[0,c_2]$ in other ones.

\noindent The coefficients are defined by \mathindent=0em \medmuskip=3mu
\begin{alignat}{5}\label{2_stage_5_order_ETSGLM_coeff}
               u_2&=\left( \alpha+1 \right) ^{2} \left( 1-2\,\alpha+{\frac {3{\alpha}^{2}}{2\,c_{{2}}-1}} \right),  \notag\\
                 v&=-{\frac { \left( \alpha+1 \right) ^{2} \left(  \left( 10\,\alpha-5 \right) {c_{{2}}}^{2}-15\,c_{{2}}{\alpha}^{2}+ \left( \alpha+1 \right)  \left( 6\,{\alpha}^{2}-3\,\alpha+1 \right)  \right) }{5\,{c_{{2}}}^{2}-1}},  \notag\\
\widetilde{a}_{21}&={\alpha}^{2} \left( \alpha+1 \right) -{\frac {{\alpha}^{2}\left( \alpha+1 \right) ^{2} \left( 3\,c_{{2}}-1 \right) }{2\,c_{{2}}\left( 2\,c_{{2}}-1 \right) }},  \notag\\
\widetilde{a}_{22}&={\frac {{\alpha}^{2} \left( \alpha+1 \right) ^{2}}{2\,c_{{2}} \left( c_{{2}}-1 \right)  \left( 2\,c_{{2}}-1 \right) }},  \notag\\
   \widetilde{b}_1&={\frac {{\alpha}^{2} \left( \alpha+1 \right)  \left( 20{c_{{2}}}^{4}\!-\left(30 \alpha+10 \right) {c_{{2}}}^{3}+ \left( 12{\alpha}^{2}\!+\!3\alpha\!-\!13 \right) {c_{{2}}}^{2}+ \left( 4{\alpha}^{2}\!+\!11\alpha\!+\!3 \right) c_{{2}}-2\alpha\left( \alpha+1 \right)  \right)}{ 4c_2\left( 5{c_2}^{2}-1\right)  \left( c_2+1 \right) }},  \notag\\
   \widetilde{b}_2&={\frac { {\alpha}^{2}\left( \alpha+1 \right)^{2}\left( 5\,{c_{{2}}}^{2}- \left( 4\,\alpha-3 \right) c_{{2}}-2\,\alpha \right) }{4 c_2 \left( 5\,{c_{{2}}}^{2}-1 \right)\left( c_2-1 \right) }}\\
            a_{21}&=\alpha \left( \alpha+1 \right) ^{2} \left( 1-{\frac {\alpha\, \left( 3\,c_{{2}}-2 \right) }{2 \left( 2\,c_{{2}}-1 \right)  \left( c_{{2}}-1 \right) }} \right),  \notag\\
               b_1&={\frac {\alpha\left( \alpha+1 \right)^2  \left( 20{c_2}^{4}\!-\left(30 \alpha+20 \right) {c_{{2}}}^{3}+ \left( 12{\alpha}^{2}\!+\!21\alpha\!-\!4 \right) {c_{{2}}}^{2}+ \left( -4{\alpha}^{2}\!+\!3\alpha\!+\!4 \right) c_{{2}}-2\alpha\left( \alpha+1 \right)  \right)}{ 4c_2\left( 5{c_2}^{2}-1\right)  \left( c_2-1 \right) }},  \notag\\
               b_2&=-{\frac { {\alpha}^{2}\left( \alpha+1 \right)^{2}\left( 5\,{c_{{2}}}^{2}- \left( 4\,\alpha+7 \right) c_{{2}}+2\,\alpha+2 \right) }{4 c_2 \left( 5\,{c_{{2}}}^{2}-1 \right)\left( c_2+1 \right) }}.  \notag
\end{alignat}
To attain the discrete stage order five, we determine $c_2$ from $\Gamma_{2\,5}(1)=0$. We have \linebreak[4]
\vspace{-\baselineskip}
\begin{equation}
  \ c_2=\dfrac{11-\sqrt{41}}{10}.
\end{equation}
\mathindent=2.5em
\vspace{-\baselineskip}

\noindent The relation $\Gamma_6(1)=-{\dfrac {16\ \ \bigl(17-2\sqrt {41}\,\bigr)}{75\,\bigl(71-11\sqrt {41}\,\bigr)}\ne 0}$ implies that it is impossible to attain discrete order six.\linebreak[3]

\vspace{-6pt}
So we construct the explicit TSRK method of uniform order five, uniform stage order four and discrete stage order five.
\enlargethispage{\baselineskip}
\vspace{-0.5\baselineskip}
\begin{MyRemark}
  There is not a method of uniform stage order two in a class of explicit one-step RK methods for RFDEs.

  Indeed, for explicit one-step RK methods $c_1=0,\ c_2\ne0$ and  $a_{2,j}=0,\ j=2,\dots,s$,
  hence $\Gamma_{2\,k}=-\dfrac{\alpha^k}{k!}\ne0,\quad \alpha\in(0,c_2], \ k=2,3,\dots\ \ .$

\vspace{-6pt}

  It is known~\cite{Dekker} that methods with low stage order suffer from the order reduction phenomenon when applied to stiff ODEs.
  Hence, the explicit TSRK methods may be more appropriate for some mildly stiff RFDEs.
\end{MyRemark}
\vspace{-0.5\baselineskip}

\end{document}